\documentclass[12pt,reqno,draft]{article} 
\usepackage{amsmath,amssymb,amsthm,amsfonts, indentfirst, empheq}
\usepackage{enumerate,color,bm,graphicx,here}
\usepackage{multicol}
\makeatletter
\def\@cite#1#2{[{{\bfseries #1}\if@tempswa , #2\fi}]}
\renewcommand{\section}{%
\@startsection{section}{1}{\z@}
{0.5truecm plus -1ex minus -.2ex}%
{1.0ex plus .2ex}{\bfseries\large}}
\def\@seccntformat#1{\csname the#1\endcsname.\ }
\makeatother

\numberwithin{equation}{section} 
\theoremstyle{theorem}
\newtheorem{thm}{Theorem}[section]

\newtheorem{lem}[thm]{Lemma}
\newtheorem{prop}[thm]{Proposition}
\theoremstyle{definition}
\newtheorem{df}[thm]{Definition}

\newtheorem{ex}[thm]{Example}

\newcommand{\ep}{\varepsilon}
\newcommand{\pa}{\partial}
\newcommand{\R}{\mathbb{R}}
\newcommand{\N}{\mathbb{N}}

\begin{document}
\footnote[0]
    {2020{\it Mathematics Subject Classification}\/. 
    Primary: 35K65; Secondary: 35K55, 34B15, 34C25.
    }
    
\footnote[0]
    {{\it Key words and phrases}\/: 
    chemotaxis; degenerate; volume-filling effect;
             stationary state.
    }
\begin{center}
    \Large{{\bf Global well-posedness and 
                     flat-hump-shaped stationary solutions
                     for degenerate chemotaxis systems 
                     with threshold density}}
\end{center}
\vspace{5pt}
\begin{center}
    Osuke Shibata,\quad 
    Tomomi Yokota%
    \footnote{Corresponding author.}%
    \footnote{Partially supported by 
                   JSPS KAKENHI Grant Number JP25K00917.}
   \footnote[0]{
    E-mail: 
    {\tt 1125522@ed.tus.ac.jp} (O.\,Shibata),\quad
    {\tt yokota@rs.tus.ac.jp} (T.\,Yokota)
    }\\
    \vspace{12pt}
    Department of Mathematics, 
    Tokyo University of Science\\
    1-3, Kagurazaka, Shinjuku-ku, 
    Tokyo 162-8601, Japan\\
    \vspace{2pt}
\end{center}
\begin{center}    
    \small \today
\end{center}

\vspace{2pt}
\newenvironment{summary}
{\vspace{.5\baselineskip}\begin{list}{}{%
     \setlength{\baselineskip}{0.85\baselineskip}
     \setlength{\topsep}{0pt}
     \setlength{\leftmargin}{12mm}
     \setlength{\rightmargin}{12mm}
     \setlength{\listparindent}{0mm}
     \setlength{\itemindent}{\listparindent}
     \setlength{\parsep}{0pt}
     \item\relax}}{\end{list}\vspace{.5\baselineskip}}
\begin{summary}
{\footnotesize {\bf Abstract.}
In a smoothly bounded domain $\Omega \subset \mathbb{R}^N$ 
$(N\in \mathbb{N})$,
a no-flux initial-boundary value problem for the degenerate chemotaxis system
with volume-filling effects,
\begin{align*}
    \begin{cases}
        u_t = \nabla \cdot (D(u,v) \nabla u - h(u,v) \nabla v),
        & \quad x\in \Omega, \  t>0,
        \\
        v_t = \Delta v + g(u,v),
        & \quad x\in \Omega, \  t>0,
    \end{cases}
\end{align*}
is considered  
under the assumptions that $D(1,s)=0$ and that $h(0,s)=h(1,s)=0$.
Here, initial data $u_0$ and $v_0$ have suitable regularity 
and satisfy 
$0\le u_0\le 1$ and $v_0\ge 0$ 
with $\nabla v_0 \cdot \nu|_{\partial \Omega} = 0$.
It is proved that there exists a global weak solution  
such that $0\le u\le 1$ and $v\ge 0$. 
Moreover, 
when $D(r,s) = D(r)$ 
for all $r\in[0,1]$ and $s\in[0,\infty)$
and additional conditions on $D$, $h$ and $g$ are assumed,
uniqueness of global weak solutions
with the mass conservation law
$\int_\Omega u(x,t) \, dx = \int_\Omega u_0(x) \, dx$
is shown.
Also,  
a flat-hump-shaped stationary solution
is constructed in the one-dimensional setting.}
\end{summary}
\vspace{10pt}

\newpage

\section{Introduction}\label{Sec:Intro}
\noindent
\textbf{Background.}
The property that cells move toward the location 
where the concentration of chemical substances is high is called 
\textit{chemotaxis}.
By chemotaxis,
the cell density near the chemical substances increases.
However, since volumes of cells are nonzero, 
the cell density has its maximal value 
and the cell movement stops at the value,
which is called \textit{volume-filling effects}
(cf.\ \cite{HP-2001-AAM}, \cite{PH-2002-CAMQ}).

\medskip
From a mathematical perspective,
there are some studies on existence and behavior of solutions to 
\textit{nondegenerate diffusive} chemotaxis systems incorporating volume-filling effects
(see e.g.\ \cite{W-2004-NA}, \cite{W-2006-PRSESA}, \cite{JZ-2009-AA},
\cite{W-2010-NA}, \cite{WWW-2011-N}).
On the other hand,
the \textit{degenerate diffusive} chemotaxis system with volume-filling effects, 
\begin{align*}
    \begin{cases}
        u_t = \nabla \cdot (D(u) \nabla u - h(u) \nabla v),
        \\
        v_t = \Delta v + g(u,v),
    \end{cases}
\end{align*}
under homogeneous Neumann boundary conditions and 
initial conditions
has been studied by Lauren\c{c}ot--Wrzosek \cite{LW-2005-PNDEA}.
Here, $u(x,t)$ and $v(x,t)$ represent cell density and 
chemical concentration, respectively.
Also, the diffusion coefficient $D$ and the sensitivity function $h$ satisfy
$D(1)=0$ and $h(0)=h(1)=0$.
In the literature, it has been shown that under some conditions on $D$, $h$ and $g$,
if initial data $u_0$ and $v_0$ fulfill 
$0\le u_0\le 1$ and $v_0\ge 0$, then there exists 
a global weak solution $(u,v)$ such that 
$0\le u\le 1$ and $v\ge 0$.
Moreover, under additional conditions on $D$, $h$ and $g$,
uniqueness of global weak solutions with the mass conservation law
$\int_\Omega u(x,t) \, dx = \int_\Omega u_0(x) \, dx$
has been established.
Recently, mathematical results on pattern formations 
in a related chemotaxis system have been provided 
in \cite{CIST-2020-DDSSB}.

\medskip
As for chemotaxis systems without volume-filling effects, 
the system
\begin{align*}
    \begin{cases}
        u_t =\nabla \cdot[(d_1+\chi v)\nabla u]
                -\chi \nabla \cdot (u\nabla v)+u(m_1-u+av),
        \\
        v_t =d_2\Delta v+v(m_2-bu-v)
    \end{cases}
\end{align*}
has been studied in \cite{HY-2023-NARWA},
where $d_1$, $d_2$, $m_1$, $\chi$, $a$ and $b$ are positive constants and 
$m_2$ is a real number. Also, 
the doubly degenerate chemotaxis system
\begin{align*}
    \begin{cases}
        u_t = \nabla \cdot (uv \nabla u) -\nabla \cdot (u^2v \nabla v),
        \\
        v_t = \Delta v+f(u)v
    \end{cases}
\end{align*}
has been considered in \cite{BKW-2026-JEE},
where $f$ satisfies the condition that 
$f(s)\ge c_f s^\alpha$ for $s\ge 1$ and 
$f(s)\le C_f s^\alpha$ for $s>0$,
with some $\alpha\in(0,\frac{2}{N})$.
Similar systems 
have already been investigated 
in \cite{W-2022-NARA}, \cite{W-2024-JDE} and \cite{CD-2025-AML}.
From these recent trends, 
it would be meaningful to analyze chemotaxis systems with diffusion and sensitivities depending not only on $u$ but also on $v$.
However, to the best of our knowledge, there is no study 
on chemotaxis systems with
volume-filling effects where diffusion coefficients 
and chemotactic sensitivity functions depend on both $u$ and $v$, 
whereas the cases independent of $v$ 
have recently been studied in
\cite{LZT-2025-AMO} and \cite{SCSS-2025-NARWA}.
These statements give rise to the natural question  
whether there are solutions 
of volume-filling chemotaxis models 
even when diffusion and sensitivities depend on both $u$ and $v$.

\medskip
\noindent
{\bf Main problem and results.}
We shall address chemotaxis systems 
with volume-filling effects 
when diffusion coefficients and chemotactic sensitivity functions depend on both $u$ and $v$.
Specifically, this paper focuses on the initial-boundary
value problem
\begin{align}\label{Sys:Main}
    \begin{cases}
        u_t = \nabla \cdot (D(u,v) \nabla u - h(u,v) \nabla v),
        & \quad x\in \Omega, \  t>0,
        \\
        v_t = \Delta v + g(u,v),
        & \quad x\in \Omega, \  t>0,
        \\
        (D(u,v) \nabla u - h(u,v) \nabla v) \cdot \nu = \nabla v \cdot \nu = 0,
        & \quad x\in \partial\Omega, \  t>0,
        \\
        u(\cdot, 0) = u_0, \ v(\cdot, 0) = v_0,
        & \quad x\in \Omega
    \end{cases}
\end{align}
in a smoothly bounded domain $\Omega \subset \R^N$ $(N \in \N)$,
where $\nu$ is the outward normal vector to $\pa\Omega$.
To make our overall hypotheses more precise 
we shall suppose that
\begin{align}
\label{Con:D}
   &\begin{cases}
          D \in C^2([0,1] \times [0,\infty)), 
          \quad D(1,s) = 0 \quad (s \in [0,\infty)), 
          \\
          \exists\, D_0 \in C([0,1]) \quad \mbox{s.t.} 
          \quad D(r,s) \ge D_0(r) > 0 \quad ((r,s) \in [0,1) \times [0,\infty)),
     \end{cases} 
   \\ 
\label{Con:h}
   &\begin{cases}
          h \in C^2([0,1] \times [0,\infty)), 
          \quad h(0,s) = h(1,s) = 0 \quad (s \in [0,\infty)), 
          \\
          h(r,s) > 0 \quad ((r,s) \in (0,1) \times [0,\infty)),
     \end{cases} 
   \\ 
\label{Con:g}
   &\begin{cases}
          g \in C^2([0,1] \times [0,\infty)), 
          \\
          g(r,0) \ge 0 \quad \mbox{and} 
          \quad \exists\, \kappa > 0 \quad \mbox{s.t.} 
          \quad g_s(r,s) \le \kappa 
          \quad \forall\, (r,s) \in [0,1] \times [0,\infty),
     \end{cases} 
   \\ 
\label{Con:ini}
   &\begin{cases}
          u_0 \in L^{\infty}(\Omega) 
          \quad \mbox{with} \quad 0 \le u_0 \le 1
          \ \mbox{a.e.\ in} \ \Omega, 
          \\
          v_0 \in L^{\infty}(\Omega) \cap H^{2}(\Omega)
          \quad \mbox{with} \quad \nabla v_0 \cdot \nu = 0 
          \ \mbox{on}\ \pa\Omega \quad \mbox{and} 
          \quad v_0 \ge 0 \ \mbox{a.e.\ in} \ \Omega.
    \end{cases}
\end{align}

Let us define a global weak solution of \eqref{Sys:Main}.
We let $C_{\mathrm{w}}([0,\infty);L^2(\Omega))$ 
denote the set of $L^2(\Omega)$-valued functions 
defined on $[0,\infty)$ which are continuous 
with respect to the weak topology in $L^2(\Omega)$.
\begin{df}[Global weak solutions] \label{Def:WS}
Let $u_0$ and $v_0$ satisfy \eqref{Con:ini}. 
Then a couple $(u,v)$ will be called a 
\textit{global weak solution} of \eqref{Sys:Main} if 
\begin{enumerate}[(a)]
   \item $u \in C_{\mathrm{w}}([0,\infty);L^2(\Omega)) \cap
                      L^\infty(\Omega \times (0,\infty))$, \ 
            $0 \le u \le 1$ a.e.\ in $\Omega \times (0,\infty)$,
   \item $v \in C([0,\infty);L^2(\Omega)) \cap
                      L^{2}_{\mathrm{loc}}([0,\infty);H^2(\Omega)) \cap
                 L^{\infty}_{\mathrm{loc}}(\overline{\Omega} \times [0,\infty))$, \ 
            $v \ge 0$ a.e.\ in $\Omega \times (0,\infty)$,
   \item $\mathcal{D}(u,v) \in L^{2}_{\mathrm{loc}}([0,\infty);H^1(\Omega))$, \ 
            $\mathcal{D}_s(u,v) \nabla v 
              \in L^{2}_{\mathrm{loc}}([0,\infty);(L^2(\Omega))^N)$,
            where
     \begin{equation} \label{def:hanaD}
            \mathcal{D}(r,s) := \int_0^r D(\sigma,s) \, d\sigma \quad \mbox{for}\ (r,s) \in [0,1) \times [0,\infty),
     \end{equation}
   \item for all $T > 0$ and $\varphi \in H^1(0,T;H^1(\Omega))$ 
            with $\varphi(T) = 0$,
     \begin{gather*}
            -\int_0^T \int_\Omega u \varphi_t \, dxdt 
            +\int_0^T \int_\Omega (D(u,v) \nabla u - h(u,v) \nabla v)
                                             \cdot \nabla \varphi \, dxdt
         = \int_\Omega u_0 \varphi(0) \, dx, 
         \\
            -\int_0^T \int_\Omega v \varphi_t \, dxdt 
            +\int_0^T \int_\Omega \nabla v \cdot \nabla \varphi \, dxdt 
            -\int_0^T \int_\Omega g(u,v)\varphi \, dxdt
         = \int_\Omega v_0 \varphi(0) \, dx, 
     \end{gather*}
            where $D(u,v)\nabla u
                        :=\nabla[\mathcal{D}(u,v)]-\mathcal{D}_s(u,v)\nabla v$.
\end{enumerate}
\end{df}
\noindent

Our main result reads as follows.
\begin{thm}[Existence] \label{Thm:exist}
Assume that \eqref{Con:D}, \eqref{Con:h}, \eqref{Con:g} 
and \eqref{Con:ini} hold, and that
     \begin{equation} \label{Con:hanaDpas}
          \forall\, K > 0 \  \exists\, M > 0 \quad \mbox{s.t.} \quad
          |\mathcal{D}_s(r,s)| \le M \quad \forall\, (r,s) \in [0,1] \times [0,K],
     \end{equation}
where $\mathcal{D}$ is defined as in \eqref{def:hanaD}.
Then the problem \eqref{Sys:Main} admits
a global weak solution $(u,v)$  
of \eqref{Sys:Main} such that
     \begin{equation} \label{shituryouhozonn}
          \int_\Omega u(x,t) \, dx = \int_\Omega u_0(x) \, dx 
          \quad \forall\, t \ge 0.
     \end{equation}
\end{thm}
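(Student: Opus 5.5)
We follow the approximation strategy of Lauren\c{c}ot--Wrzosek \cite{LW-2005-PNDEA}, adapted to the dependence on $v$. For $\ep\in(0,1)$ we replace $D$ by a nondegenerate coefficient $D_\ep(r,s):=D(\pi(r),s)+\ep$, where $\pi(r)=\min\{\max\{r,0\},1\}$. We extend $h$ by zero outside $[0,1]$ in its first variable, extend $g$ suitably (constant in $r$ outside $[0,1]$), and regularize $u_0$ to smooth data $u_{0\ep}$ with $0\le u_{0\ep}\le 1$. Standard parabolic theory (Amann's theory for quasilinear triangular systems) gives a local classical solution $(u_\ep,v_\ep)$ of the regularized problem.

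The first step is the invariant region $0\le u_\ep\le1$ and $v_\ep\ge0$. Because $h(0,s)=h(1,s)=0$, the constants $0$ and $1$ are sub- and supersolutions of the $u$-equation written as $u_t=\nabla\cdot(D_\ep\nabla u)-h_r\nabla u\cdot\nabla v-h_s|\nabla v|^2 \cdot(\ldots)$. More directly, testing with $(u_\ep-1)_+$ and with $u_{\ep,-}$ makes the cross term vanish on the relevant sets, and Gronwall's lemma gives the bound. Nonnegativity of $v_\ep$ follows from $g(r,0)\ge0$. The condition $g_s\le\kappa$ gives $g(r,s)\le g(r,0)+\kappa s$, so comparison yields $\|v_\ep(t)\|_\infty\le C(T)$ on $[0,T]$, uniformly in $\ep$. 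Hence the solution is global. Mass conservation for $u_\ep$ follows by integrating the first equation with the no-flux condition.

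Next we derive $\ep$-uniform estimates on $(0,T)$. Since $u_\ep$ is bounded, $g(u_\ep,v_\ep)$ is bounded in $L^\infty$. Maximal regularity in $L^2$, using $v_0\in H^2$ with the compatibility condition, then bounds $v_\ep$ in $H^1(0,T;L^2)\cap L^2(0,T;H^2)$, and $\nabla v_\ep$ in $L^\infty(0,T;L^2)$. For $u_\ep$ we test the first equation with $\mathcal{D}(u_\ep,v_\ep)$ (with $\ep$-modification), or more simply with $u_\ep$. This gives $\int\!\!\int D_\ep|\nabla u_\ep|^2\le C$, using Young's inequality and $|h|\le C$. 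Then $\nabla\mathcal{D}(u_\ep,v_\ep)=D\nabla u_\ep+\mathcal{D}_s\nabla v_\ep$. By \eqref{Con:hanaDpas} with $K=C(T)$, together with $D\le C$, $D^2|\nabla u|^2\le C\,D|\nabla u|^2$ and the bound on $\nabla v_\ep$ in $L^2$, we obtain $\mathcal{D}(u_\ep,v_\ep)$ bounded in $L^2(0,T;H^1)$. The equation also gives $\partial_tu_\ep$ bounded in $L^2(0,T;(H^1)')$.

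The main obstacle is the passage to the limit in the degenerate nonlinear terms, since $u_\ep$ itself has no gradient bound. First, $v_\ep\to v$ strongly in $L^2(0,T;H^1)$ and a.e. by Aubin--Lions. For $u_\ep$, we set $w_\ep:=\mathcal{D}(u_\ep,v_\ep)$ and aim for strong compactness of $w_\ep$ in $L^2(Q_T)$. The plan is to apply a compactness lemma of Kruzhkov/Alt--Luckhaus type: the spatial $H^1$ bound on $w_\ep$ is combined with time-translate estimates. These come from $\int\!\!\int (u_\ep(t+\tau)-u_\ep(t))(w_\ep(t+\tau)-w_\ep(t))\le C\tau$, where the dependence on $v$ is controlled through $|\mathcal{D}_s|\le M$ and the time regularity of $v_\ep$. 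Since $r\mapsto\mathcal{D}(r,s)$ is strictly increasing and continuous, uniformly for $s\in[0,K]$ (because $D\ge D_0>0$ on $[0,1)$), strong convergence of $w_\ep$ together with a.e. convergence of $v_\ep$ yields $u_\ep\to u$ a.e. Then $h(u_\ep,v_\ep)\nabla v_\ep\to h(u,v)\nabla v$ and $\mathcal{D}_s(u_\ep,v_\ep)\nabla v_\ep\to\mathcal{D}_s(u,v)\nabla v$ strongly in $L^2$, while $\nabla w_\ep\rightharpoonup\nabla\mathcal{D}(u,v)$ weakly. The $\ep\nabla u_\ep$ term tends to zero because $\sqrt\ep\|\nabla u_\ep\|_{L^2}\le C$. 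This identifies the weak formulation in Definition~\ref{Def:WS}(d) and gives property (c). Weak continuity $u\in C_{\mathrm w}([0,\infty);L^2)$ follows from the $L^\infty$ bound and the $(H^1)'$ time-derivative bound. Mass conservation \eqref{shituryouhozonn} passes to the limit by taking $\varphi\equiv1$ (or by weak continuity). A diagonal argument over $T\in\N$ completes the proof.
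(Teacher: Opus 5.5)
\textbf{The gap: the $\ep$-uniform estimate for $u_\ep$.} You claim that testing with $u_\ep$ and applying Young's inequality with $|h|\le C$ gives $\int_0^T\!\int_\Omega D_\ep|\nabla u_\ep|^2\le C$. This does not close. Young's inequality on $\int_\Omega h(u_\ep,v_\ep)\nabla v_\ep\cdot\nabla u_\ep\,dx$ can only absorb $\nabla u_\ep$ against the weight $D_\ep$. That leaves $\int_\Omega \frac{h^2}{D_\ep}(u_\ep,v_\ep)|\nabla v_\ep|^2\,dx$, and the hypotheses give no bound on $h^2/D$ near $r=1$. For example, $D(r,s)=(1-r)^4$ and $h(r,s)=r(1-r)$ satisfy \eqref{Con:D}, \eqref{Con:h} and \eqref{Con:hanaDpas} (here $\mathcal{D}_s\equiv 0$), yet $\sup_r h^2/(D+\ep)\to\infty$ as $\ep\to 0$. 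This degeneracy is exactly what dictates the choice of test function.

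Your other option, testing with $\mathcal{D}(u_\ep,v_\ep)$, does not give the bound you state either. First, it controls $\int_0^T\!\int_\Omega D_\ep^2|\nabla u_\ep|^2$, not $\int_0^T\!\int_\Omega D_\ep|\nabla u_\ep|^2$. Second, $\partial_t u_\ep\,\mathcal{D}(u_\ep,v_\ep)$ is not an exact time derivative. One has to compute $\frac{d}{dt}\int_\Omega \widetilde{\mathcal{D}^\ep}(u_\ep,v_\ep)\,dx$ with $\widetilde{\mathcal{D}^\ep}(r,s)=\int_0^r(\mathcal{D}(\sigma,s)+\ep\sigma)\,d\sigma$, and this produces an extra term $\int_\Omega(\widetilde{\mathcal{D}^\ep})_s(u_\ep,v_\ep)\,\partial_t v_\ep\,dx$ that you do not address. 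Consequently your later claim $\sqrt\ep\,\|\nabla u_\ep\|_{L^2(\Omega\times(0,T))}\le C$ is also unsupported.

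\textbf{Two ways to repair it.} The first is the paper's: carry out the $\widetilde{\mathcal{D}^\ep}$ computation. The cross term becomes $\int_\Omega D_\ep h\,\nabla u_\ep\cdot\nabla v_\ep\,dx$, which Young's inequality absorbs into $\|D_\ep(u_\ep,v_\ep)\nabla u_\ep\|_{L^2(\Omega)}^2$ using only $|h|\le C$. The $\mathcal{D}_s$-terms and the $\partial_t v_\ep$-term are handled by \eqref{Con:hanaDpas} together with the maximal-regularity bounds on $\nabla v_\ep$ and $\Delta v_\ep$. This bounds $D_\ep(u_\ep,v_\ep)\nabla u_\ep$, hence $\nabla[\mathcal{D}(u_\ep,v_\ep)]$ and the flux, in $L^2(\Omega\times(0,T))$. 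The term $\ep\nabla u_\ep=\nabla(\ep u_\ep)$ is then bounded in $L^2$ and converges weakly to $0$ because $\ep u_\ep\to 0$, so no $\sqrt\ep$ is needed.

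The second keeps your test with $u_\ep$ but integrates the cross term by parts instead of using Young. With $H(r,s):=\int_0^r h(\sigma,s)\,d\sigma$ one has $h(u_\ep,v_\ep)\nabla u_\ep=\nabla[H(u_\ep,v_\ep)]-H_s(u_\ep,v_\ep)\nabla v_\ep$, so by $\nabla v_\ep\cdot\nu=0$,
\[
\int_\Omega h(u_\ep,v_\ep)\nabla v_\ep\cdot\nabla u_\ep\,dx
=-\int_\Omega H(u_\ep,v_\ep)\,\Delta v_\ep\,dx-\int_\Omega H_s(u_\ep,v_\ep)\,|\nabla v_\ep|^2\,dx .
\]
The right-hand side is $\ep$-uniformly integrable on $(0,T)$ by your $L^2(0,T;H^2(\Omega))$ bound and $v_\ep\le C(T)$. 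This genuinely gives $\int_0^T\!\int_\Omega D_\ep|\nabla u_\ep|^2\le C(T)$, which is stronger than the paper's estimate. After that, your chain, including the $\sqrt\ep$ argument, goes through.

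\textbf{Your compactness route.} With the estimate fixed, your argument is a valid alternative to the paper's. The paper bounds $\nabla[\mathcal{D}_0(u^\ep)]$ and $\partial_t u^\ep$ in $L^2(0,T;(H^1(\Omega))')$ and applies Aubin--Lions to $Q(u^\ep)=\int_0^{u^\ep}D_0^2$; this needs a chain rule involving $D_0'$ and a duality bound in $(W^{1,N+1}(\Omega))'$. Your Alt--Luckhaus route works directly with $w_\ep=\mathcal{D}(u_\ep,v_\ep)$ and uses only monotonicity, at the price of proving the translate estimate by hand. To make it precise, split $w_\ep(t+\tau)-w_\ep(t)$ into two parts:
\begin{itemize}
\item the increment in $r$ at frozen $s=v_\ep(t+\tau)$, controlled via $|\mathcal{D}(a,s)-\mathcal{D}(b,s)|^2\le \|D\|_{L^\infty}(a-b)(\mathcal{D}(a,s)-\mathcal{D}(b,s))$ by your product estimate (itself $\le C\tau$ by duality with the $L^2$-bounded flux);
\item the increment in $s$, bounded by $M|v_\ep(t+\tau)-v_\ep(t)|$, which is $O(\tau)$ in $L^2(\Omega\times(0,T-\tau))$ since $\partial_t v_\ep$ is bounded in $L^2$.
\end{itemize}
This yields $\int_0^{T-\tau}\|w_\ep(t+\tau)-w_\ep(t)\|_{L^2(\Omega)}^2\,dt\le C\tau$, and Simon's criterion applies.

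A minor point: extending $D$ and $h$ through $\pi(r)$ or by zero gives only Lipschitz coefficients. For Amann's classical theory it is safer to use $C^2$ extensions as in Lemma~\ref{Lem:claS} and control the cross term in the invariance argument by $|h(u,v)-h(0,v)|\le C u_-$.
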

\begin{thm}[Uniqueness] \label{Thm:unique}
Assume that $D(r,s) = D(r)$
for all $r\in[0,1]$ and $s\in[0,\infty)$
and that \eqref{Con:D}, \eqref{Con:h}, \eqref{Con:g} 
and \eqref{Con:ini} hold. 
Assume further that $v_0 \in W^{2,p}(\Omega)$ with $p>N$,
that
     \begin{equation} \label{SCon:Dh}
          \begin{cases}
               \forall\, K > 0 \ \exists\, C_0, C_1 > 0 \quad \mbox{s.t.} 
               \quad \forall\, (r_1, s_1), (r_2, s_2) \in [0,1] \times [0,K] 
               \\
               (h(r_1, s_1) - h(r_2, s_2))^2 
               \le C_0 (r_1 - r_2) (\mathcal{D}(r_1) - \mathcal{D}(r_2)) 
                     + C_1 (s_1 - s_2)^2, 
          \end{cases} 
     \end{equation}
where $\mathcal{D}(r) := \int_0^r D(\sigma) \, d\sigma$ for $r \in [0,1]$, 
and that
     \begin{equation} \label{SCon:g}
          \begin{cases}
               \exists\, C_2 > 0 \ \exists\, g_1, g_2 \in C^2 ([0,\infty)) 
               \ \mbox{with}  \ 
               g_1(0) \ge 0, \  g_2(0) \ge 0 \quad \mbox{s.t.}
               \\
               g(r,s) = g_1(s) + r g_2(s), 
               \quad \max \{g_1'(s), g_2'(s)\} \le C_2 
               \quad ((r,s) \in [0,1] \times [0,\infty)).
          \end{cases}
     \end{equation}
Then the global weak solution $(u,v)$ of \eqref{Sys:Main} 
satisfying \eqref{shituryouhozonn} is unique.
\end{thm}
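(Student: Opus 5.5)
We follow the duality method of Lauren\c{c}ot--Wrzosek \cite{LW-2005-PNDEA}. Let $(u_1,v_1)$ and $(u_2,v_2)$ be two global weak solutions satisfying \eqref{shituryouhozonn} with the same data, and fix $T>0$. Set $U:=u_1-u_2$ and $V:=v_1-v_2$. Since $D=D(r)$, we have $\mathcal{D}_s\equiv0$ and $D(u_i)\nabla u_i=\nabla\mathcal{D}(u_i)$. Mass conservation gives $\int_\Omega U(t)\,dx=0$ for all $t$, so the Neumann problem
\begin{equation*}
-\Delta W=U,\qquad \nabla W\cdot\nu=0,\qquad \int_\Omega W\,dx=0
\end{equation*}
has a unique solution. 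The key functional is $\|\nabla W(t)\|_{L^2}^2$, an $H^{-1}$-type norm of $U$, which we combine with $\|V(t)\|_{L^2}^2$.

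\textbf{Step 1: regularity of $v_i$.} Because $v_0\in W^{2,p}(\Omega)$ with $p>N$, we have $u_i\in L^\infty$, and $g$ is affine in $r$ with $g_1'$, $g_2'$ bounded above, maximal regularity for the $v$-equation should yield $v_i\in L^\infty(\Omega\times(0,T))$ and $\nabla v_i\in L^\infty(\Omega\times(0,T))$. Obtaining the gradient bound is the step I expect to be the main obstacle. The approach is as follows. First, the upper bounds on $g_1'$ and $g_2'$ together with comparison give $0\le v_i\le K$. Second, $g(u_i,v_i)\in L^\infty$. Third, $L^q$ maximal regularity with $q$ large, together with $v_0\in W^{2,p}$, gives $v_i\in W^{2,1}_q$, which embeds into $L^\infty(0,T;W^{1,\infty})$. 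This bound on $\nabla v_i$ is needed to control the chemotactic difference.

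\textbf{Step 2: estimate for $W$.} Formally we test the difference of the $u$-equations with $W$; rigorously this requires a Steklov averaging in time, using the weak formulation with $\varphi=W$. This gives
\begin{equation*}
\frac12\frac{d}{dt}\|\nabla W\|_{L^2}^2+\int_\Omega(\mathcal{D}(u_1)-\mathcal{D}(u_2))U\,dx=\int_\Omega\big(h(u_1,v_1)-h(u_2,v_2)\big)\nabla v_1\cdot\nabla W\,dx+\int_\Omega h(u_2,v_2)\nabla V\cdot\nabla W\,dx.
\end{equation*}
For the first term on the right, Young's inequality, \eqref{SCon:Dh} and the bound $\|\nabla v_1\|_\infty\le C$ give
\begin{equation*}
\le\frac{1}{2C_0\|\nabla v_1\|_\infty^2}\int_\Omega(h_1-h_2)^2\,dx+C\|\nabla W\|_{L^2}^2\le\frac12\int_\Omega U(\mathcal{D}(u_1)-\mathcal{D}(u_2))\,dx+C\|V\|_{L^2}^2+C\|\nabla W\|_{L^2}^2.
\end{equation*}
The first piece is absorbed by the monotone term on the left. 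For the second term on the right, $h$ is bounded, so it is at most $\ep\|\nabla V\|_{L^2}^2+C_\ep\|\nabla W\|_{L^2}^2$.

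\textbf{Step 3: estimate for $V$.} Testing the $V$-equation with $V$ gives
\begin{equation*}
\frac12\frac{d}{dt}\|V\|_{L^2}^2+\|\nabla V\|_{L^2}^2=\int_\Omega\big(g_1(v_1)-g_1(v_2)+u_1(g_2(v_1)-g_2(v_2))\big)V\,dx+\int_\Omega g_2(v_2)\,U\,V\,dx.
\end{equation*}
Since $v_i\in[0,K]$ and $g_1,g_2\in C^2$, both are Lipschitz there, so the first integral is at most $C\|V\|_{L^2}^2$. In the last integral we write $U=-\Delta W$ and integrate by parts:
\begin{equation*}
\int_\Omega g_2(v_2)\,U\,V\,dx=\int_\Omega\nabla W\cdot\big(g_2(v_2)\nabla V+g_2'(v_2)V\nabla v_2\big)\,dx\le\frac12\|\nabla V\|_{L^2}^2+C\big(\|\nabla W\|_{L^2}^2+\|V\|_{L^2}^2\big).
\end{equation*}
Here the bound on $\nabla v_2$ from Step 1 is used again.

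\textbf{Conclusion.} Adding the two inequalities with $\ep$ small and setting $y:=\|\nabla W\|_{L^2}^2+\|V\|_{L^2}^2$, we obtain $y'\le Cy$ with $y(0)=0$. Gronwall's lemma gives $y\equiv0$ on $[0,T]$. Hence $V=0$ and $\nabla W=0$, so $U=-\Delta W=0$ in the distributional sense. Since $T>0$ was arbitrary, uniqueness follows.
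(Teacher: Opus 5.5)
Your proposal is correct and follows essentially the paper's route: you test the $u$-difference with $W=\mathcal{N}U$, absorb the chemotactic difference into the monotone term $\int_\Omega(\mathcal{D}(u_1)-\mathcal{D}(u_2))U\,dx$ via \eqref{SCon:Dh}, estimate $V$ in $L^2$ by writing $U=-\Delta W$ and integrating by parts, and close with Gronwall for $\|\nabla W\|_{L^2}^2+\|V\|_{L^2}^2$. Your bound $\nabla v_i\in L^\infty(\Omega\times(0,T))$ is stronger than the paper's $\nabla v\in L^2(0,T;L^\infty(\Omega))$, but either suffices; one small slip is that in Step 2 the Young weight on $\int_\Omega(h(u_1,v_1)-h(u_2,v_2))^2\,dx$ should be $\frac{1}{2C_0}$ (with companion term $\frac{C_0}{2}\|\nabla v_1\|_{L^\infty}^2\|\nabla W\|_{L^2}^2$), not $\frac{1}{2C_0\|\nabla v_1\|_{L^\infty}^2}$, since that is what yields the absorbable $\frac12\int_\Omega U(\mathcal{D}(u_1)-\mathcal{D}(u_2))\,dx$.
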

Now, we provide examples fulfilling the assumption 
of Theorem \ref{Thm:exist} or Theorem \ref{Thm:unique}.
\begin{ex}
An example of $D$ and $h$ satisfying 
\eqref{Con:D}, \eqref{Con:h} and \eqref{Con:hanaDpas}
in Theorem \ref{Thm:exist} is given by
   $D(r,s) = (1 - r)^2 (s + 1)$ and 
   $h(r,s) = r (1 - r)^2 (s + 1)$.
Indeed, we have $\mathcal{D}(r,s) =- \frac13 (s+1) [(1-r)^3-1]$.
Therefore, for each $K > 0$, we have
$|\mathcal{D}_s(r,s)| \le \frac{K+1}{3}$
for every $r \in [0,1]$ and $s \in [0,K]$.
\end{ex}
\begin{ex}
An example of $D$ and $h$ satisfying 
\eqref{Con:D}, \eqref{Con:h} and \eqref{SCon:Dh} 
in Theorem \ref{Thm:unique} is given by
   $D(r,s) = (1 - r)^2$ and
   $h(r,s) = r (1 - r)^2 (s + 1)$.
In order to see that $D$ and $h$ fulfill \eqref{SCon:Dh}
we let $K > 0$ and fix $(r_1,s_1), (r_2,s_2) \in [0,1] \times [0,K]$. 
Then
\begin{align*}
   | h(r_1,s_1)-h(r_2,s_2) | 
   &\le | h(r_1,s_1)-h(r_2,s_1) | + | h(r_2,s_1)-h(r_2,s_2) | 
   \\
   &\le \left| \int_{r_2}^{r_1} |h_r(x,s_1)| \, dx \right|
          +\left| \int_{s_2}^{s_1} |h_s(r_2,y)| \, dy \right|.
\end{align*}
Since $h_r(r,s) = (1-r) (1-3r) (s+1)$ and 
$h_s(r,s)= r(1-r)^2$, 
we see that
\begin{align*}
   \left| \int_{r_2}^{r_1} |h_r(x,s_1)| \, dx \right|
   &\le 2 (K+1) \left| \int_{r_2}^{r_1} |1-x| \, dx \right| 
   \\
   &\le 2 (K+1) | r_1-r_2 |^{\frac12} 
          \left| \int_{r_2}^{r_1} |1-x|^2 \, dx \right|^{\frac12}
\end{align*}
and 
\[
    \left| \int_{s_2}^{s_1} |h_s(r_2,y)| \, dy \right|
    = \left| \int_{s_2}^{s_1} r_2 (1-r_2)^2 \, dy \right|
    \le | s_1-s_2 |.
\]
Therefore, we obtain
\[
    (h(r_1,s_1)-h(r_2,s_2))^2 
    \le 8 (K+1)^2 (r_1-r_2) (\mathcal{D}(r_1)-\mathcal{D}(r_2)) 
         +2 (s_1-s_2)^2.
\]
\end{ex}

\noindent
{\bf Key idea of the proof.}
Theorem \ref{Thm:exist} is proved by convergence of solutions
to approximate systems.
To this end, we need 
several uniform estimates for approximate solutions.
The key in the proof 
is how to deal with the partial derivative of $\mathcal{D}(u,v)$
with respect to the second variable.
If $D$ does not depend on $v$ as in \cite{LW-2005-PNDEA}, 
one can obtain the simple relation 
$\nabla[\mathcal{D}(u)]=D(u)\nabla u$.
However, since $D$ depends on both $u$ and $v$ in this paper, 
such simple relation breaks down and we have
$\nabla[\mathcal{D}(u,v)]=D(u,v)\nabla u+\mathcal{D}_s(u,v)\nabla v$.
Hence, to handle the additional term 
$\mathcal{D}_s(u,v)\nabla v$, we suggest the assumption
$|\mathcal{D}_s(r,s)|\le M$.

The idea in the proof of Theorem \ref{Thm:unique}
is to assume the new condition \eqref{SCon:Dh}, 
which is a generalization of \cite[Condition (9)]{LW-2005-PNDEA}.
Since the left-hand side of the condition \eqref{SCon:Dh} 
depends on the second variable,
we need terms depending on the second variable 
in addition to the first term
$C_0 (r_1 - r_2) (\mathcal{D}(r_1) - \mathcal{D}(r_2))$ 
on the right-hand side.
Also, since there are terms 
$\int_\Omega (v-\widehat{v})^2 \, dx$ and
$\int_0^t \int_\Omega (v-\widehat{v})^2 \, dxds$ in the proof 
of Theorem \ref{Thm:unique},
where $v$ and $\widehat{v}$ are solutions of \eqref{Sys:Main},
we can apply the Gronwall lemma by adding the term $C_1(s_1-s_2)^2$ 
to the right-hand side of the condition \eqref{SCon:Dh}.

\medskip
\noindent
{\bf\boldmath 
Additional topic.}
We also construct flat-hump-shaped stationary solutions 
of \eqref{Sys:Main}.
The key assumptions are 
$D(r,s)\equiv D_1(r)D_2(s)$ and $h(r,s)\equiv h_1(r)h_2(s)$.
These assumptions are essential to define $j(u,v)$
such that $\nabla[j(u,v)]=0$ as in the proof 
of Proposition \ref{P2} below, 
which generalizes \cite[Proposition 7]{LW-2005-PNDEA}.
Indeed, let $D(u,v) \nabla u - h(u,v) \nabla v=0$.
Then if $D(r,s)\equiv D_1(r)D_2(s)$ and $h(r,s)\equiv h_1(r)h_2(s)$,
we can obtain
$\frac{D_1(u)}{h_1(u)}\nabla u-\frac{h_2(v)}{D_2(v)}\nabla v=0$.
Thus the first and second terms on the left-hand side 
can be rewritten as 
$\nabla [j_1(u)]$ and $\nabla [j_2(v)]$ 
for some functions $j_1$ and $j_2$, respectively,
and then $j$ is defined as $j=j_1-j_2$.

\medskip
\noindent
{\bf Organization of this paper.}
The remainder of this paper is organized as follows.
In Section \ref{Sec:Prelimi} we show existence 
of global classical solutions to
the nondegenerate version of \eqref{Sys:Main}, which will be used 
as approximate problems. 
Section \ref{Sec:GWSE} is devoted to the the proof of  
existence of global weak solutions 
by passing to the limit of approximate solutions
obtained from Section \ref{Sec:Prelimi}.
Uniqueness of global weak solutions is discussed
in Section \ref{Sec:GWSU}.
In Section \ref{Sec:SS} we consider steady states   
of \eqref{Sys:Main} in the one-dimensional framework.

\section{Preliminaries} \label{Sec:Prelimi}
As a preparation for the proof of Theorem \ref{Thm:exist}, 
we establish existence of global classical solutions to a
nondegenerate version of the model \eqref{Sys:Main}.
\begin{lem} \label{Lem:claS}
Let $p > N$ and let $u_0, v_0 \in W^{1,p}(\Omega)$.
If \eqref{Con:D}, \eqref{Con:h} and \eqref{Con:g} 
hold and in addition
$D \in C^2([0,1] \times [0,\infty))$ such that
\begin{equation} \label{D:hitaika}
     \exists\, \eta > 0 \quad \mbox{s.t.} \quad D(r,s) \ge \eta \quad
     \forall\, (r,s) \in [0,1] \times [0,\infty),
\end{equation}
then the problem \eqref{Sys:Main} possesses a
unique global classical solution $(u,v)$ such that
\[
    (u,v) \in \big(C(\overline{\Omega} \times [0,\infty)) \cap
                       C^{2,1}(\overline{\Omega} \times (0,\infty))\big)^2
\] 
and that
\[
    0\le u(x,t)\le 1\quad \mbox{and} \quad v(x,t)\ge 0 \quad 
    \forall\, (x,t)\in \overline{\Omega} \times [0,\infty).
\]
Moreover, we have
\begin{equation} \label{shituryouhozonn2}
    \int_\Omega u(x,t) \, dx = \int_\Omega u_0(x) \, dx \quad 
    \forall\, t \ge 0.
\end{equation}
\end{lem}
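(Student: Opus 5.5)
We will combine local existence theory for quasilinear parabolic systems, maximum principle arguments, and a priori bounds that exclude blow-up. Throughout, $D$, $h$ and $g$ are defined only on $[0,1]\times[0,\infty)$. We therefore first extend them to $\R\times\R$ as follows: $\widetilde D(r,s):=D(\min\{\max\{r,0\},1\},\max\{s,0\})\ge\eta$; $\widetilde h$ is $h$ on $[0,1]\times[0,\infty)$ and $0$ for $r\notin[0,1]$ (continuous because $h(0,s)=h(1,s)=0$); and $\widetilde g$ is a $C^1$-type extension in $s$ by $g(\cdot,0)$ for $s<0$. If the truncation breaks the required smoothness, we smooth near the boundary of the range, which is harmless once the invariance of $[0,1]\times[0,\infty)$ is established.

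\textbf{Local existence.} The extended system is normally parabolic in Amann's sense: the principal part has the upper triangular matrix $\begin{pmatrix}\widetilde D&-\widetilde h\\0&1\end{pmatrix}$, whose eigenvalues $\widetilde D\ge\eta$ and $1$ are positive. Amann's theory for quasilinear parabolic systems with $u_0,v_0\in W^{1,p}(\Omega)$, $p>N$, then gives a unique maximal classical solution on $[0,T_{\max})$ in $C(\overline\Omega\times[0,T_{\max}))\cap C^{2,1}(\overline\Omega\times(0,T_{\max}))$. It also gives the extensibility criterion: if $T_{\max}<\infty$, then $\|u(t)\|_{W^{1,p}}+\|v(t)\|_{W^{1,p}}\to\infty$. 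In fact it suffices to bound the $C^\alpha$ norms of $u,v$ on $[0,T]$ for each $T<T_{\max}$.

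\textbf{Invariance.} For $v\ge0$, note that $\widetilde g(r,0)\ge0$ and $\widetilde g_s\le\kappa$, so $\widetilde g(u,v)\ge -\kappa' v^-$ on $\{v<0\}$. Testing the $v$-equation with $v^-$ gives $\frac{d}{dt}\int(v^-)^2\le 2\kappa'\int(v^-)^2$, hence $v^-\equiv0$. For $u\ge0$, test the $u$-equation with $u^-$. The taxis term vanishes on $\{u<0\}$ because $\widetilde h=0$ there, while the diffusion term gives $-\int\widetilde D|\nabla u^-|^2\le0$; so $\int(u^-)^2$ is nonincreasing and hence $0$. The bound $u\le1$ follows in the same way by testing with $(u-1)^+$, since $\widetilde h(r,\cdot)=0$ for $r\ge1$. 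These tests are legitimate because $u,v$ are classical on $(0,T_{\max})$ and continuous up to $t=0$.

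Once $0\le u\le1$ and $v\ge0$ hold, the extended and original nonlinearities coincide, so we are solving \eqref{Sys:Main}. Integrating the $u$-equation and using the no-flux boundary condition gives \eqref{shituryouhozonn2}.

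\textbf{Global existence (main obstacle).} We need to exclude blow-up, i.e.\ bound $v$ and $u$ in Hölder spaces on finite intervals.
1. Since $u\in[0,1]$ and $g_s\le\kappa$, we have $g(u,v)\le \max_{r\in[0,1]}g(r,0)+\kappa v$. Comparison then gives $0\le v\le e^{\kappa t}(\|v_0\|_\infty+C)$.
2. Treat $g(u,v)$ as an $L^\infty$ source. Maximal regularity and heat semigroup smoothing estimates then bound $\|v\|_{W^{1,q}}$ for every $q<\infty$ on $[0,T]$; bootstrapping from $v_0\in W^{1,p}$ bounds $\nabla v$ in $L^\infty(\Omega\times(0,T))$.
3. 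Rewrite the $u$-equation as $u_t=\nabla\cdot(a(x,t)\nabla u - F(x,t))$, where $a=D(u,v)\in[\eta,\max D]$ and $F=h(u,v)\nabla v$ is bounded. The De Giorgi–Nash–Moser/Hölder estimates of Ladyzhenskaya–Solonnikov–Ural'tseva (or Porzio–Vespri) yield a $C^{\alpha,\alpha/2}$ bound for $u$.
4. Then $g(u,v)$ is Hölder continuous, so Schauder theory gives $C^{2+\alpha,1+\alpha/2}$ bounds for $v$ away from $t=0$. Feeding these back into the $u$-equation, now linear with Hölder coefficients, bounds $\nabla u$ in $L^p$.

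This contradicts the extensibility criterion, so $T_{\max}=\infty$. Uniqueness comes from Amann's theory, since any classical solution with values in $[0,1]\times[0,\infty)$ solves the extended system.

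The delicate point is step 3 together with the bookkeeping near $t=0$, where only $W^{1,p}$ regularity is available. We plan to handle it by working with Amann's criterion in the form of a uniform $C^\alpha$ bound, which is valid for $p>N$.
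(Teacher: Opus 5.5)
There is a genuine gap in your invariance step for $u$. Your claim that $\int_\Omega (u^-)^2$ is nonincreasing relies on $\widetilde h$ vanishing \emph{identically} on $\{r<0\}$, and on $\{r>1\}$ for the bound $u\le 1$. That holds only for the truncated extension, which is merely Lipschitz. You yourself propose to smooth it to meet the regularity Amann's theory needs; the paper works with $C^2$ extensions.

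The smoothing remedy is circular. Invariance has to be proved for the system you actually solve. A $C^1$ extension of $h$ can vanish for $r<0$ only if $h_r(0,s)=0$, which is not assumed; for $h(r,s)=r(1-r)^2(s+1)$ one has $h_r(0,s)=s+1$. After smoothing, the taxis term on $\{u<0\}$ no longer disappears and has no sign.

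The missing ingredient is the paper's argument. Keep only $\overline h(0,\cdot)=\overline h(1,\cdot)=0$ and local Lipschitz continuity, so that $|\overline h(u,v)|\le L\,u^-$ on $\{u<0\}$. Young's inequality then absorbs the cross term into the diffusion:
$2\int_\Omega |\overline h(u,v)|\,|\nabla v|\,|\nabla u^-| \le \eta\int_\Omega|\nabla u^-|^2+\frac{L^2}{\eta}\int_\Omega (u^-)^2|\nabla v|^2$.
Gronwall closes this provided the resulting coefficient is integrable in time. Near $t=0$ you only have $v_0\in W^{1,p}(\Omega)$, so use $\nabla v\in L^\infty(0,T;L^p(\Omega))$ with $p>N$, plus a Gagliardo--Nirenberg bound $\|u^-\|_{L^{2p/(p-2)}}^2\le \epsilon\|\nabla u^-\|_{L^2}^2+C_\epsilon\|u^-\|_{L^2}^2$. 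The bound $u\le1$ follows the same way applied to $1-u$. Your $v\ge0$ step is already of this Gronwall type and is fine.

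Apart from this, your route to $T_{\max}=\infty$ differs from the paper's. The paper needs no regularity bootstrap. The system is triangular, and Amann's Theorem 15.5 for triangular systems gives global existence from the $L^\infty$ bounds $0\le u\le 1$ and $0\le v\le (1+\|v_0\|_{L^\infty(\Omega)})e^{c_3t}$ alone, with the solution staying inside $[0,1]\times[0,\infty)$, away from the edge of the region where the extended coefficients live.

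Your steps 2--4 essentially reprove that theorem for this system. You freeze $v$, treat the $u$-equation as a scalar conormal problem with bounded measurable diffusivity, and feed the resulting H\"older bounds into Amann's H\"older-type continuation criterion. This works and makes explicit why $L^\infty$ bounds suffice, but it costs H\"older estimates up to a Neumann boundary and careful bookkeeping at $t=0$.

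In particular, $\nabla v\in L^\infty(\Omega\times(0,T))$ is false in general for $v_0\in W^{1,p}(\Omega)$, but you do not need it. The bound $\nabla v\in L^\infty(0,T;L^p(\Omega))$ with $p>N$ already meets the integrability condition for De Giorgi--Nash--Moser, and $u_0\in W^{1,p}(\Omega)\hookrightarrow C^{1-N/p}(\overline\Omega)$ gives H\"older continuity up to $t=0$. Your step 4 can then be dropped. As written, with bounds only away from $t=0$, it would not give the uniform $W^{1,p}$ bound you invoke anyway.
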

\begin{proof}
By virtue of the continuity of $D$ and \eqref{D:hitaika}, 
we see that there exist $\delta > 0$ and
$\overline{D}, \overline{h}, \overline{g}
\in C^2((-\delta,1+\delta) \times (-\delta,\infty))$ such that
\begin{equation}\label{tuikaa}
\overline{D}(r,s)=D(r,s),\quad\overline{h}(r,s)=h(r,s)\quad\mbox{and}\quad
\overline{g}(r,s)=g(r,s)
\end{equation} for all $(r,s) \in [0,1]\times [0,\infty)$,
that
$\overline{g}(r,0) \ge 0$ for $r\in (-\delta,1+\delta)$ 
and that
\begin{equation} \label{Con:Dba-}
    \overline{D}(r,s) \ge \frac{\eta}{2} \quad 
    \forall\, (r,s) \in (-\delta,1+\delta) \times (-\delta,\infty).
\end{equation}
Set $R_{\delta} := (-\delta,\infty) \times (-\delta,1+\delta)$.
Let
\begin{align*}
    a(y)&:= 
    \begin{pmatrix}
                1       &                        0 
    \\
         -\overline{h}(y_2,y_1) &\hspace{5pt} \overline{D}(y_2,y_1)
    \end{pmatrix}, 
    \\[3mm]
    f(y) &:= 
    \begin{pmatrix}
         \overline{g}(y_2,y_1) 
    \\
               0
    \end{pmatrix}
\end{align*}
for $y = (y_1,y_2) \in R_\delta$. 
Then $a \in C^2(R_{\delta}; \R^{2\times2})$, that is, 
all components of the matrix $a(y)$ belong to $C^2(R_\delta)$.
For $y \in R_\delta$, setting $a_{jk}(y) := a(y) \delta_{jk}$ 
for $1 \le j, k \le N$, where $\delta_{jk}$ is the the Kronecker delta, 
we introduce the operators
\begin{align*}
    \mathcal{A}(y)z&:= -\sum_{j, k=1}^{N} \partial_j(a_{jk}(y) \partial_k z), \\[2mm]
    \mathcal{B}(y)z&:= \sum_{j, k=1}^{N} \nu_j \cdot a_{jk}(y) \partial_kz
\end{align*}
with $z = (z_1,z_2)$, where $\nu=(\nu_1, \dots , \nu_N)$.
Then \eqref{Sys:Main} with $D=\overline{D}$,
$h=\overline{h}$ and $g=\overline{g}$ 
can be rewritten as
\[
    \left\{
    \begin{aligned}
      z_t + \mathcal{A}(z)z &= f(z), & \quad 
      &x\in \Omega, \  t>0,    
    \\[2mm]
      \mathcal{B}(z)z &= 0, & 
      &x\in \partial\Omega, \  t>0, 
    \\[2mm]
      z(0) &= (v_0,u_0), & &x\in \Omega
    \end{aligned}
    \right.
\]
with $z = (v,u)$. 
From \eqref{Con:Dba-} it follows that all eigenvalues of $a(y)$ are positive 
for $y \in R_\delta$. 
Also, $(\mathcal{A}(y), \mathcal{B}(y))$ is of separated divergence form
in the sense of \cite[Example 4.3 (e)]{A-1993-TTM} and hence,
$(\mathcal{A}(y), \mathcal{B}(y))$ is normally elliptic 
for all $y\in R_\delta$ by \cite[Section 4]{A-1993-TTM}.
Therefore, we see from \cite[Theorems 14.4 and 14.6]{A-1993-TTM} that 
\eqref{Sys:Main} with 
$D=\overline{D}$, $h=\overline{h}$ and $g=\overline{g}$
has a unique maximal classical solution
\[
    z = (v,u) \in \big(C(\overline{\Omega} \times [0,T_*)) \cap 
                             C^{2,1}(\overline{\Omega} \times (0,T_*))\big)^2 \quad 
    \text{with}\quad T_* \in (0,\infty].
\]

We now claim that
\begin{equation} \label{Con:bounds}
    0 \le u \le 1 \quad \mbox{and} \quad v \ge 0 \quad 
    \mbox{in} \ \overline{\Omega} \times [0,T_*).
\end{equation}
First, we show that $v \ge 0$ in $\overline{\Omega} \times [0,T_*)$. 
To this end, we let
$w_+ := \max\{ w, 0 \}$ and let $w_- := - \min\{ w , 0 \}$. 
Then we have $w = w_+ - w_-$.
Multiplying the second equation in \eqref{Sys:Main} by $v_-$, 
integrating it over $\Omega$ and 
using the boundary condition for $v$ in \eqref{Sys:Main}, 
we infer that 
\begin{align*}
    \frac{d}{dt} \int_\Omega (v_-)^2 \, dx 
    &= -2 \int_\Omega |\nabla v_-|^2 \, dx 
         -2 \int_\Omega \overline{g}(u,v) v_- \, dx 
    \\
    &\le -2 \int_\Omega (\overline{g}(u,v)-\overline{g}(u,0))v_- \, dx 
    \\
    &\le c_1 \int_\Omega (v_-)^2 \, dx,
\end{align*}
where $c_1>0$ is a constant determined by 
the mean value theorem and the fact that
$u,v \in C(\overline{\Omega} \times [0,T_*)) \cap 
                             C^{2,1}(\overline{\Omega} \times (0,T_*))$.
The Gronwall lemma implies that $\int_\Omega (v_-)^2 \, dx = 0$ 
for all $t \in (0,T_*)$, which means $v_- = 0$. 
Thus $v = v_+ - v_- = v_+ \ge 0$ in $\overline{\Omega} \times [0,T_*)$.

We next claim that $u \ge 0$ in $\overline{\Omega} \times [0,T_*)$. 
By multiplying the first equation in \eqref{Sys:Main} by $u_-$
and integrating it over $\Omega$, we observe 
from \eqref{Con:Dba-} and the Young inequality that
\begin{align*}
    \frac{d}{dt} \int_\Omega (u_-)^2 \, dx
    &= -2 \int_\Omega \overline{D}(u,v) |\nabla (u_-)|^2 \, dx 
         -2 \int_\Omega \overline{h}(u_-,v) 
         \nabla v \cdot \nabla (u_-) \, dx 
    \\
    &\le -\eta \int_\Omega |\nabla (u_-)|^2 \, dx 
    -2 \int_\Omega (\overline{h}(u_-,v)-\overline{h}(0,v)) \nabla v \cdot \nabla (u_-) \, dx \\
    &\le \frac{1}{\eta} \int_\Omega |(\overline{h}(u_-,v)-\overline{h}(0,v)) \nabla v|^2 \, dx 
    \\
    &\le c_2 \int_\Omega (u_-)^2 \, dx,
\end{align*}
where $c_2 > 0$ is a constant determined by the mean value theorem and the fact that
$u,v \in C(\overline{\Omega} \times [0,T_*)) \cap 
                             C^{2,1}(\overline{\Omega} \times (0,T_*))$.
Thanks to the Gronwall lemma, we derive 
$\int_\Omega (u_-^2) \, dx = 0$
for all $t \in (0,T_*)$. 
This yields the inequality $u \ge 0$ in $\overline{\Omega} \times [0,T_*)$.
Moreover, noting that $(\widetilde{u}, \widetilde{v}):=(1-u, v)$ 
is a solution of the problem
\begin{align*}
    \begin{cases}
        \widetilde{u}_t 
        = \nabla \cdot (\overline{D}(1-\widetilde{u}, \widetilde{v}) \nabla \widetilde{u} 
                               + \overline{h}(1-\widetilde{u},\widetilde{v}) \nabla \widetilde{v}),
        &  \quad x\in \Omega, \  t>0,
        \\
        \widetilde{v}_t = \Delta \widetilde{v} + \overline{g}(1-\widetilde{u},\widetilde{v}),
        & \quad x\in \Omega, \  t>0,
        \\
        (\overline{D}(1-\widetilde{u},\widetilde{v}) \nabla \widetilde{u} 
         + \overline{h}(1-\widetilde{u},\widetilde{v}) \nabla \widetilde{v}) \cdot \nu 
        = \nabla\widetilde{v} \cdot \nu = 0,
        &  \quad x\in \partial\Omega, \  t>0,
        \\
        \widetilde{u}(\cdot, 0) = 1-u_0, \quad \widetilde{v}(\cdot, 0) = v_0,
        &  \quad x\in \Omega,
    \end{cases}
\end{align*}
we conclude that 
$u(x,t) \le 1$ for all $(x,t) \in \overline{\Omega} \times [0,T_*)$.

Also, the identity \eqref{shituryouhozonn2} follows by integrating the first equation over
$\Omega \times (0,t)$.

Finally, by the second equation in \eqref{Sys:Main} 
and the condition \eqref{Con:g} together with \eqref{Con:bounds}, we have
\begin{align*}
    v_t(x,t) - \Delta v(x,t)
    &\le g(u(x,t),0) + \kappa v(x,t) 
    \\
    &\le (\| g(\cdot,0) \|_{L^\infty(0,1)} + \kappa) (1 + v(x,t))
\end{align*}
for all $(x,t) \in \overline{\Omega} \times [0,T_*)$. 
Thus
\begin{equation} \label{ini:vupbound}
    v(x,t) \le (1 + \| v_0 \|_{L^\infty(\Omega)}) e^{c_3 t} \quad 
    \forall\,(x,t) \in \overline{\Omega} \times [0,T_*),
\end{equation}
where $c_3 := \| g(\cdot,0) \|_{L^\infty(0,1)} + \kappa$. 
By virtue of \cite[Theorem 15.5]{A-1993-TTM}, 
we see that $T_* = \infty$.
Since \eqref{tuikaa} and \eqref{Con:bounds} hold, 
we arrive at the conclusion of Lemma \ref{Lem:claS}.
\end{proof}
\section{Existence of global weak solutions} \label{Sec:GWSE}
\begin{proof}[Proof of Theorem \ref{Thm:exist} (Existence)] 
The proof will be achieved through six steps. 

\medskip
\noindent
{\bf Step 1. Construction of approximate solutions.}
Fix $\ep \in (0,1)$ and let
\begin{equation} \label{Dep}
D^\ep(r,s) := D(r,s) + \ep, \quad (r,s) \in [0,1] \times [0,\infty).
\end{equation}
Also, in view of \eqref{Con:ini}, we can take 
$(u_0^\ep,v_0^\ep) \in \big(W^{1,N+1}(\Omega)\cap L^\infty(\Omega)\big)^2$ 
such that $0 \le u_0^\ep \le 1$ and $v_0^\ep \ge 0$ 
with $v_0^\ep \in H^2(\Omega)$ and 
$\nabla v_0^\ep \cdot \nu = 0$ on $\partial \Omega$ 
and  that
\begin{equation} \label{inikinji}
\begin{cases}
\| v_0^\ep \|_{L^\infty(\Omega)} \le \| v_0 \|_{L^\infty(\Omega)}+1 
\quad \mbox{and} \quad 
\| v_0^\ep \|_{H^{2}(\Omega)} \le \| v_0 \|_{H^{2}(\Omega)}+1, 
\\
\| u_0^\ep - u_0 \|_{L^2(\Omega)} + \| v_0^\ep - v_0 \|_{L^2(\Omega)} \le \ep.
\end{cases}
\end{equation}
According to Lemma \ref{Lem:claS} with $D$ and $(u_0,v_0)$ 
replaced by $D^\ep$ and $(u^\ep_0, v^\ep_0)$, respectively, 
the problem \eqref{Sys:Main} admits a unique classical solution 
$(u^\ep,v^\ep)$ fulfilling
\begin{align}\label{Sys:Mainkinji}
    \begin{cases}
        (u^\ep)_t 
        = \nabla \cdot (D^\ep(u^\ep,v^\ep) \nabla u^\ep 
                               - h(u^\ep,v^\ep) \nabla v^\ep),
        & \quad x\in \Omega, \  t>0,
        \\
        (v^\ep)_t = \Delta v^\ep + g(u^\ep,v^\ep),
        & \quad x\in \Omega, \  t>0,
        \\
        (D^\ep(u^\ep,v^\ep) \nabla u^\ep 
         - h(u^\ep,v^\ep) \nabla v^\ep) \cdot \nu 
        = \nabla v^\ep \cdot \nu = 0,
        & \quad x\in \partial \Omega, \  t>0,
        \\
        u^\ep(\cdot, 0) = u_0^\ep, \quad v^\ep(\cdot, 0) = v_0^\ep,
        & \quad x\in \Omega,
    \end{cases}
\end{align}
and
\begin{equation} \label{clabounds}
0 \le u^\ep(x,t) \le 1 \quad \mbox{and} \quad v^\ep(x,t) \ge 0 \quad
\forall\,(x,t) \in \overline{\Omega} \times [0,\infty).
\end{equation}
Moreover, a combination of \eqref{ini:vupbound} with \eqref{inikinji} yields
\begin{equation*} 
0 \le v^\ep(x,t) \le (2 + \| v_0 \|_{L^\infty(\Omega)}) e^{c_3 t} \quad 
\forall\, (x,t) \in \overline{\Omega} \times [0,\infty).
\end{equation*}
In particular, for each $T > 0$, there exists $C_1(T)>0$ independent of $\ep$ 
such that
\begin{equation} \label{vepbounds}
0 \le v^\ep(x,t) \le C_1(T) \quad 
\forall\, (x,t) \in \overline{\Omega} \times [0,T].
\end{equation}

\medskip
\noindent
{\bf \boldmath Step 2. $\ep$-independent estimates for 
$\Delta v^\ep$ and $(v^\ep)_t$.}
Fix $\ep \in (0,1)$. 
We claim that 
for any  $T>0$ there exists $C_2(T) > 0$ such that
\begin{equation} \label{vepesti}
\int_0^T \| v^\ep \|_{L^2(\Omega)}^2 \, dt 
+\int_0^T \| \Delta v^\ep \|_{L^2(\Omega)}^2 \, dt 
+ \int_0^T \| (v^\ep)_t \|_{L^2(\Omega)}^2 \, dt
\le C_2(T),
\end{equation}
in particular, since 
$\|\nabla v^\ep \|_{L^2(\Omega)}^2
 =(v^\ep, -\Delta v^\ep)_{L^2(\Omega)} 
 \le \| v^\ep \|_{L^2(\Omega)} \|\Delta v^\ep \|_{L^2(\Omega)}$,
we have
\begin{equation} \label{tuikadayo}
\int_0^t \|\nabla v^\ep \|_{L^2(\Omega)}^2 \, dt \le C_2(T),
\end{equation}
where $C_2(T)$ is a constant depending on $T$
and is independent of $\ep$.
Since the inhomogeneous term $g(u^\ep,v^\ep)$ in \eqref{Sys:Mainkinji} 
is bounded in $\Omega \times [0,T)$ for all $T>0$ 
by virtue of \eqref{clabounds} and \eqref{vepbounds},
when combined with \eqref{inikinji}, \eqref{vepbounds} and 
the maximal Sobolev regularity 
for parabolic equations \cite[3.1 Theorem]{HP-1997-CPDE}, 
the inequality \eqref{vepesti} follows.

\medskip
\noindent
{\bf \boldmath Step 3. $\ep$-independent estimates for 
$D^\ep(u^\ep,v^\ep)\nabla u^\ep$.}
We intend to confirm that 
\begin{equation} \label{Con:Depuepvep}
\int_0^T \| D^\ep(u^\ep,v^\ep) \nabla u^\ep \|_{L^2(\Omega)}^2 \, dt \le C(T)
\end{equation}
for some $C(T)>0$ which is independent of $\ep$.
To this end, for each $r\in [0,1]$ and $s\in [0,\infty)$, put
\begin{align}
\label{hanaDep}
\mathcal{D}^\ep(r,s) 
&:= \int_0^r D^\ep (\sigma,s) \, d\sigma, 
\\
\label{Con:hanaDeptil}
\widetilde{\mathcal{D}^\ep}(r,s) 
&:= \int_0^r \mathcal{D}^\ep (\sigma,s) \, d\sigma.
\end{align}
Then we can see that $\mathcal{D}^\ep \in C^1([0,1] \times [0,\infty))$ and
$\widetilde{\mathcal{D}^\ep} \in C^2([0,1] \times [0,\infty))$ with
\[
(\widetilde{\mathcal{D}^\ep})_{rr} (r,s) = (\mathcal{D}^\ep)_r (r,s) = D^\ep (r,s)
\]
for all $(r,s) \in [0,1] \times [0,\infty)$ and with
$\widetilde{\mathcal{D}^\ep} (0,s) = \mathcal{D}^\ep (0,s) = 0$
for all $s \in [0,\infty)$.
Using the first and second equations in \eqref{Sys:Mainkinji}, we have
\begin{align}
\nonumber
\frac{d}{dt} \int_\Omega \widetilde{\mathcal{D}^\ep}(u^\ep,v^\ep) \, dx
&=\int_\Omega (\widetilde{\mathcal{D}^\ep})_r(u^\ep,v^\ep)
                       \cdot(u^\ep)_t \, dx
    +\int_\Omega (\widetilde{\mathcal{D}^\ep})_s(u^\ep,v^\ep)
                        \cdot(v^\ep)_t \, dx 
\\
\nonumber
&=\int_\Omega 
          \mathcal{D}^\ep(u^\ep,v^\ep) 
          \nabla \cdot (D^\ep(u^\ep,v^\ep) \nabla u^\ep 
                                     - h(u^\ep,v^\ep) \nabla v^\ep)
    \, dx 
\\
\nonumber
&\quad \, +\int_\Omega (\widetilde{\mathcal{D}^\ep})_s(u^\ep,v^\ep)
\big(\Delta v^\ep + g(u^\ep,v^\ep)\big) \, dx \\[2mm]
\label{tuika} 
&=:\mathcal{I}_1+\mathcal{I}_2.
\end{align}
We consider the first term $\mathcal{I}_1$ 
on the right-hand side of \eqref{tuika}.
Integration by parts gives
\begin{align}
\nonumber
\mathcal{I}_1
&= - \int_\Omega \nabla [\mathcal{D}^\ep(u^\ep,v^\ep)] \cdot
(D^\ep(u^\ep,v^\ep) \nabla u^\ep - h(u^\ep,v^\ep) \nabla v^\ep) \, dx \\
\nonumber
&=- \int_\Omega (D^\ep(u^\ep,v^\ep) \nabla u^\ep 
    + (\mathcal{D}^\ep)_s(u^\ep,v^\ep) \nabla v^\ep) \cdot 
    (D^\ep(u^\ep,v^\ep) \nabla u^\ep 
- h(u^\ep,v^\ep) \nabla v^\ep) \, dx \\
\nonumber
&
\le - \int_\Omega D^\ep(u^\ep,v^\ep)^2 |\nabla u^\ep|^2 \, dx
+\int_\Omega D^\ep(u^\ep,v^\ep) h(u^\ep,v^\ep) |\nabla u^\ep| |\nabla v^\ep| \, dx
 \\
\nonumber
&
\quad \, +\int_\Omega (\mathcal{D}^\ep)_s (u^\ep,v^\ep)
D^\ep(u^\ep,v^\ep) |\nabla u^\ep| |\nabla v^\ep| \, dx
+\int_\Omega(\mathcal{D}^\ep)_s(u^\ep,v^\ep)h(u^\ep,v^\ep) |\nabla v^\ep|^2 \, dx 
\\[2mm]
\label{X}
&
=: \mathcal{J}_1 + \mathcal{J}_2 + \mathcal{J}_3 + \mathcal{J}_4. 
\end{align}
Here, we have
\begin{equation} \label{j1}
\mathcal{J}_1=-\| D^\ep(u^\ep,v^\ep) \nabla u^\ep \|_{L^2(\Omega)}^2.
\end{equation}
Recalling \eqref{vepbounds} and letting $\delta>0$, 
we see from the Young inequality that 
\begin{equation} \label{Con:ap1stright2nd}
 \mathcal{J}_2
 \le \| h \|_{L^\infty((0,1) \times (0,C_1(T)))} 
 \Big(\delta \| D^\ep(u^\ep,v^\ep) \nabla u^\ep \|_{L^2(\Omega)}^2
  + \frac{1}{4\delta} \| \nabla v^\ep \|_{L^2(\Omega)}^2\Big).
\end{equation}
In order to estimate $\mathcal{J}_3$ and $\mathcal{J}_4$ we check that
\begin{equation} \label{Con:hanaDeppas}
| (\mathcal{D}^\ep)_s(u^\ep,v^\ep) | \le M(T)
\end{equation}
for some $M(T) > 0$ which is independent of $\ep$. 
Indeed, by the definitions of $\mathcal{D}^\ep$, $D^\ep$ and 
$\mathcal{D}$ (see \eqref{Dep}, 
\eqref{hanaDep} and \eqref{def:hanaD}), 
for each $r\in[0,1]$ and $s\in[0,\infty)$,
we have
\[
    \mathcal{D}^\ep(r,s) 
    = \int_0^r D^\ep(\sigma,s) \, d\sigma 
    = \int_0^r D(\sigma,s) \, d\sigma + \ep r
    = \mathcal{D}(r,s) + \ep r.
\]
Differentiating this identity with respect to $s$, we obtain 
$(\mathcal{D}^\ep)_s(r,s) = \mathcal{D}_s(r,s)$,
which along with \eqref{clabounds}, \eqref{vepbounds} 
and \eqref{Con:hanaDpas}
with $K = C_1(T)$ leads to \eqref{Con:hanaDeppas}. 
We now estimate $\mathcal{J}_3$ and $\mathcal{J}_4$ 
on the right-hand side of \eqref{X}.
By virtue of \eqref{Con:hanaDeppas} and the Young inequality, we observe that 
\begin{equation} \label{Con:ap1stright3rd}
\mathcal{J}_3
\le \delta M(T)  \| D^\ep(u^\ep,v^\ep) \nabla u^\ep \|_{L^2(\Omega)}^2
 + \frac{1}{4\delta} M(T) \| \nabla v^\ep \|_{L^2(\Omega)}^2,
\end{equation}
and use \eqref{clabounds}, \eqref{vepbounds} and \eqref{Con:hanaDeppas} to reveal that 
\begin{equation} \label{Con:ap1stright4th}
\mathcal{J}_4
\le M(T) \| h \|_{L^\infty((0,1) \times (0,C_1(T)))} 
             \| \nabla v^\ep \|_{L^2(\Omega)}^2.
\end{equation}
Collecting \eqref{j1}, \eqref{Con:ap1stright2nd}, \eqref{Con:ap1stright3rd} 
and \eqref{Con:ap1stright4th} in \eqref{X} yields
\begin{align*}
\mathcal{I}_1 
&\le (\delta \| h \|_{L^\infty((0,1) \times (0,C_1(T)))} + \delta M(T) - 1)
\| D^\ep(u^\ep,v^\ep)\nabla u^\ep \|_{L^2(\Omega)}^2 \\
&\quad \, + 
\Big(\frac{1}{4 \delta} \| h \|_{L^\infty((0,1) \times (0,C_1(T)))}
  + \frac{1}{4 \delta} M(T) + M(T) \| h \|_{L^\infty((0,1) \times (0,C_1(T)))}\Big)
  \| \nabla v^\ep \|_{L^2(\Omega)}^2.
\end{align*}
Here, fixing 
$0 < \delta < \frac{1}{\| h \|_{L^\infty((0,1) \times (0,C_1(T)))} + M(T)}$ 
and setting
\begin{align*}
&C_3(T) := 1-\delta \| h \|_{L^\infty((0,1) \times (0,C_1(T)))} - \delta M(T), \\
&C_4(T) := \frac{1}{4 \delta} \| h \|_{L^\infty((0,1) \times (0,C_1(T)))}
  + \frac{1}{4 \delta} M(T) + M(T) \| h \|_{L^\infty((0,1) \times (0,C_1(T)))},
\end{align*}
we have
\begin{equation}\label{Con:ap1strightcon}
\mathcal{I}_1
\le -C_3(T)\| D^\ep(u^\ep,v^\ep)\nabla u^\ep \|_{L^2(\Omega)}^2 \\
+ C_4(T) \| \nabla v^\ep \|_{L^2(\Omega)}^2.
\end{equation}
We consider the second term $\mathcal{I}_2$ 
on the right-hand side of \eqref{tuika}.
Due to the definition of $\widetilde{\mathcal{D}^\ep}$
(see \eqref{Con:hanaDeptil}) and \eqref{Con:hanaDeppas}, we obtain
$| (\widetilde{\mathcal{D}^\ep})_s(u^\ep,v^\ep) | \le M(T).$
This fact together with the H\"{o}lder inequality, \eqref{clabounds} 
and \eqref{vepbounds} entails that
\begin{align}
\nonumber
\mathcal{I}_2
&\le M(T)\int_\Omega | \Delta v^\ep | \, dx
+ M(T) \int_\Omega |g(u^\ep,v^\ep)| \, dx \\
\label{Con:ap1stconchanright3rd}
&\le \frac12 M(T)|\Omega|^{\frac12} + \frac12 M(T) \| \Delta v^\ep \|_{L^2(\Omega)}^2
+ M(T) |\Omega| \| g \|_{L^\infty((0,1) \times (0,C_1(T)))}.
\end{align}
Combining \eqref{Con:ap1strightcon} 
and \eqref{Con:ap1stconchanright3rd} with \eqref{tuika},
we see that
\begin{align*}
\nonumber
&\frac{d}{dt} \int_\Omega \widetilde{\mathcal{D}^\ep}(u^\ep,v^\ep) \, dx
+C_3(T)\| D^\ep(u^\ep,v^\ep) \nabla u^\ep \|_{L^2(\Omega)}^2 \\
\nonumber
&\le C_4(T) 
  \| \nabla v^\ep \|_{L^2(\Omega)}^2 
  + \frac12 M(T)|\Omega|^{\frac12}  
+ \frac12 M(T) \| \Delta v^\ep \|_{L^2(\Omega)}^2
+ M(T) |\Omega| \| g \|_{L^\infty((0,1) \times (0,C_1(T)))}.
\end{align*}
By integrating this inequality over $(0,T)$, 
the estimates \eqref{vepesti} and \eqref{tuikadayo} yield
\begin{align*}
&C_3(T) \int_0^T \| D^\ep(u^\ep,v^\ep) \nabla u^\ep \|_{L^2(\Omega)}^2 \, dt 
+\int_\Omega \widetilde{\mathcal{D}^\ep}(u^\ep(x,T),v^\ep(x,T)) \, dx \\
&\le \int_\Omega \widetilde{\mathcal{D}^\ep}(u_0^\ep(x),v_0^\ep(x)) \, dx
 + \Big(C_4(T) +\frac12 M(T)\Big) C_2(T) \\
 &\quad \, 
 + T M(T)\Big(\frac12 |\Omega|^{\frac12} 
 + |\Omega| \| g \|_{L^\infty((0,1) \times (0,C_1(T)))}\Big).
\end{align*}
Here, we note that there is $C>0$ such that
$\int_\Omega \widetilde{\mathcal{D}^\ep}(u_0^\ep(x),v_0^\ep(x)) \, dx \le C$
in view of \eqref{inikinji}.
Therefore, we conclude as intended.
 
\medskip
\noindent
{\bf \boldmath Step 4. $\ep$-independent estimates for 
$\nabla[\mathcal{D}^\ep(u^\ep,v^\ep)]$, $\nabla[\mathcal{D}_0(u^\ep)]$
and $(u^\ep)_t$. }
For each $r\in [0,1]$, we let a function $\mathcal{D}_0$ defined by 
\begin{equation} \label{hanaD0}
\mathcal{D}_0(r):=\int_0^r D_0(\sigma) \, d\sigma.
\end{equation}
Then we assert that 
\begin{align} \label{Con:con}
&\int_0^T \| \nabla [\mathcal{D}^\ep(u^\ep,v^\ep)] \|_{L^2(\Omega)}^2 \, dt
\le C(T), \\
 \label{Con:D0}
&\int_0^T  \| \nabla [\mathcal{D}_0(u^\ep)] \|_{L^2(\Omega)}^2 \, dt \le C(T), \\
 \label{Con:ueppat}
&\int_0^T  \| (u^\ep)_t \|_{(H^1(\Omega))'}^2 \, dt \le C(T)
\end{align}
for some $C(T)>0$ (not relabelled) which is independent of $\ep$.
First, by \eqref{Con:hanaDeppas}, we have 
\begin{align*}
| \nabla [\mathcal{D}^\ep(u^\ep,v^\ep)] | 
&\le | D^\ep(u^\ep,v^\ep) \nabla u^\ep |
+ | (\mathcal{D}^\ep)_s(u^\ep,v^\ep) | | \nabla v^\ep | \\
&\le | D^\ep(u^\ep,v^\ep) \nabla u^\ep | + M(T) | \nabla v^\ep |.
\end{align*}
Hence we make sure that
\[
\| \nabla [\mathcal{D}^\ep(u^\ep(t),v^\ep(t))] \|_{L^2(\Omega)}^2 
\le 2 \| D^\ep(u^\ep(t),v^\ep(t)) \nabla u^\ep(t) \|_{L^2(\Omega)}^2
+ 2 M(T)^2 \| \nabla v^\ep(t) \|_{L^2(\Omega)}^2.
\]
By integrating this inequality over $(0,T)$, 
it follows from \eqref{tuikadayo} and \eqref{Con:Depuepvep} 
that 
\begin{align*}
\int_0^T \| \nabla [\mathcal{D}^\ep(u^\ep,v^\ep)] \|_{L^2(\Omega)}^2 \, dt 
&\le 2 \int_0^T \| D^\ep(u^\ep,v^\ep) \nabla u^\ep \|_{L^2(\Omega)}^2 \, dt
+ 2 M(T)^2 \int_0^T \| \nabla v^\ep \|_{L^{2}(\Omega)}^2 \, dt \\
&\le 2 C(T) + 2 M(T)^2 C_2(T).
\end{align*}
Thus we obtain \eqref{Con:con}.
Secondly, since $D^\ep(u^\ep,v^\ep) \ge D(u^\ep,v^\ep) \ge D_0(u^\ep)$ by
\eqref{Con:D}, we have
$| D^\ep(u^\ep,v^\ep) \nabla u^\ep| \ge | D_0(u^\ep) \nabla u^\ep | 
= | \nabla [\mathcal{D}_0(u^\ep)] |$
due to \eqref{hanaD0}, 
and hence, \eqref{Con:Depuepvep} yields \eqref{Con:D0}.
Finally, it follows from
the first equation in \eqref{Sys:Mainkinji} 
and the definition of $\mathcal{D}^\ep$ (see \eqref{hanaDep}) that
\begin{align*}
&\| (u^\ep)_t \|_{(H^1(\Omega))'}^2\\ 
&= \sup_{\substack{\varphi \in H^1(\Omega) \\ \| \varphi \|_{H^1(\Omega)} \le 1}}
| \langle (u^\ep)_t,\varphi \rangle_{(H^1(\Omega))',H^1(\Omega)} |^2 \\
&= \sup_{\substack{\varphi \in H^1(\Omega) \\ \| \varphi \|_{H^1(\Omega)} \le 1}}
| \langle D^\ep(u^\ep,v^\ep) \nabla u^\ep 
- h(u^\ep,v^\ep) \nabla v^\ep,\nabla \varphi \rangle_{(H^1(\Omega))',H^1(\Omega)} |^2 \\
&= \sup_{\substack{\varphi \in H^1(\Omega) \\ \| \varphi \|_{H^1(\Omega)} \le 1}}
| \langle \nabla [\mathcal{D}^\ep(u^\ep,v^\ep)]  
- ((\mathcal{D}^\ep)_s(u^\ep,v^\ep) + h(u^\ep,v^\ep)) \nabla v^\ep,
\nabla \varphi \rangle_{(H^1(\Omega))',H^1(\Omega)} |^2.
\end{align*}
Also, \eqref{clabounds}, \eqref{vepbounds} and \eqref{Con:hanaDeppas}
give the inequality
\[
    |(\mathcal{D}^\ep)_s(u^\ep,v^\ep) + h(u^\ep,v^\ep)| 
    \le M(T) + \| h \|_{L^\infty((0,1) \times (0,C_1(T)))}.
\]
Therefore, we have
\begin{align*}
\| (u^\ep)_t \|_{(H^1(\Omega))'}^2
&\le \sup_{\substack{\varphi \in H^1(\Omega) \\ \| \varphi \|_{H^1(\Omega)} \le 1}}
\Big(\| \nabla [\mathcal{D}^\ep(u^\ep,v^\ep)] \|_{L^2(\Omega)} 
 \| \nabla \varphi \|_{L^2(\Omega)} \\
&\hspace{25mm}+ (M(T) + \| h \|_{L^\infty((0,1) \times (0,C_1(T)))}) 
  \| \nabla v^\ep \|_{L^2(\Omega)} \| \nabla \varphi \|_{L^2(\Omega)}\Big)^2 \\
&\le 2 \Big(\| \nabla [\mathcal{D}^\ep(u^\ep,v^\ep)] \|_{L^2(\Omega)}^2
+ \big(M(T) + \| h \|_{L^\infty((0,1) \times (0,C_1(T)))}\big)^2
  \| \nabla v^\ep \|_{L^2(\Omega)}^2\Big).
\end{align*}
Integrating this inequality over $(0,T)$ and using \eqref{tuikadayo} 
and \eqref{Con:con}, we obtain 
\[
\int_0^T \| (u^\ep)_t \|_{(H^1(\Omega))'}^2 \, dt 
\le 2 \Big(C(T) + \big(M(T) + \| h \|_{L^\infty((0,1) \times (0,C_1(T)))}\big)^2C_2(T)\Big).
\]
Therefore, \eqref{Con:ueppat} holds.

\medskip
\noindent
{\bf \boldmath Step 5. Construction of a limit $(u,v)$.}
We define a function $Q\in C^2([0,1])$ by setting 
\[
Q(r) :=\int_0^r D_0^2(\sigma) \, d\sigma \quad \mbox{for} \ r \in [0,1].
\]
Now, we claim that for each $T > 0$,
\begin{equation} \label{Puep}
\left\{ Q(u^\ep) \right\}_{\ep \in (0,1)} \, \mbox{is bounded in} \,
\{ w \in L^2(0,T;H^1(\Omega)) \mid 
w_t \in L^1(0,T;(W^{1,N+1}(\Omega))') \}.
\end{equation}
Fix $T>0$.
First, we verify that $Q(u^\ep) \in L^2(0,T;H^1(\Omega))$.
We deduce from \eqref{clabounds} and \eqref{Con:D0} that 
\[
\| Q(u^\ep) \|_{L^\infty(\Omega \times (0,T))} \le \| Q \|_{L^\infty(0,1)}
\] 
and that there exists $C_5(T)>0$ independent of $\ep$ such that
\begin{align*}
\int_0^T \| \nabla [Q(u^\ep)] \|_{L^2(\Omega)}^2 \, dt
&\le \| D_0 \|_{L^\infty(0,1)}^2
\int_0^T \| \nabla [\mathcal{D}_0(u^\ep)] \|_{L^2(\Omega)}^2 \, dt \\
&\le C_5(T).
\end{align*}
Next, we prove that
$(Q(u^\ep))_t\in L^1(0,T;(W^{1,N+1}(\Omega))')$.
For all $t \in (0,T)$,
we compute
\begin{align}
\nonumber
\|  (Q(u^\ep))_t(t) \|_{(W^{1,N+1}(\Omega))'} 
\nonumber
&=\sup_{\substack{\varphi \in W^{1,N+1}(\Omega) \\ \| \varphi \|_{W^{1,N+1}(\Omega)}\le 1}}
| \langle (Q(u^\ep))_t, \varphi \rangle_{(W^{1,N+1}(\Omega))',W^{1,N+1}(\Omega)} | \\
\nonumber
&=\sup_{\substack{\varphi \in W^{1,N+1}(\Omega) \\ \| \varphi \|_{W^{1,N+1}(\Omega)}\le 1}}
| \langle (u^\ep)_t, D_0(u^\ep)^2 \varphi \rangle_{(H^1(\Omega))',H^1(\Omega)} | \\
\label{A}
&=\sup_{\substack{\varphi \in W^{1,N+1}(\Omega) \\ \| \varphi \|_{W^{1,N+1}(\Omega)}\le 1}}
\| (u^\ep)_t \|_{(H^1(\Omega))'} \| D_0(u^\ep)^2 \varphi \|_{H^1(\Omega)}.
\end{align}
Here, we estimate the term $\| D_0(u^\ep)^2 \varphi \|_{H^1(\Omega)}$
by fixing $\varphi \in W^{1,N+1}(\Omega)$.
Noting that $D_0(u^\ep) \le \| D_0 \|_{L^\infty(0,1)}$ and
$D_0'(u^\ep) \le \| D_0' \|_{L^\infty(0,1)}$ 
by \eqref{clabounds} and that
$\varphi \in L^\infty(\Omega)$ 
due to the continuous embedding of $W^{1,N+1}(\Omega)$ 
in $L^\infty(\Omega)$, we infer that
\begin{align*}
&\| D_0(u^\ep)^2 \varphi \|_{H^1(\Omega)}^2 \\
&\le \|D_0(u^\ep)^2 \varphi\|_{L^2(\Omega)}^2  
      + \|2D_0'(u^\ep) D_0(u^\ep)(\nabla u^\ep)\varphi 
           +D_0(u^\ep)^2 \nabla \varphi\|_{L^2(\Omega)}^2 \\
&\le \| D_0 \|_{L^\infty(0,1)}^4\| \varphi \|_{L^2(\Omega)}^2
+2\|2D_0'(u^\ep) \varphi D_0(u^\ep)\nabla u^\ep \|_{L^2(\Omega)}^2  
+2\|D_0(u^\ep)^2 \nabla \varphi\|_{L^2(\Omega)}^2  \\
&\le 2\Big(\| D_0 \|_{L^\infty(0,1)}^4\| \varphi \|_{H^1(\Omega)}^2
+4\| D_0' \|_{L^\infty(0,1)}^2 \| \varphi \|_{L^\infty(\Omega)}^2 
    \| \nabla [\mathcal{D}_0(u^\ep)] \|_{L^2(\Omega)}^2 \Big).
\end{align*}
Thus we have 
\[
\| D_0(u^\ep)^2 \varphi \|_{H^1(\Omega)} \le \sqrt2
\Big(\| D_0 \|_{L^\infty(0,1)}^2\| \varphi \|_{H^1(\Omega)}
+2\| D_0' \|_{L^\infty(0,1)} \| \varphi \|_{L^\infty(\Omega)} 
    \| \nabla [\mathcal{D}_0(u^\ep)] \|_{L^2(\Omega)} \Big),
\]
which along with the continuous embedding of $W^{1,N+1}(\Omega)$ 
in $L^\infty(\Omega)$ and in $H^1(\Omega)$
yields 
\[
\| D_0(u^\ep)^2 \varphi \|_{H^1(\Omega)} 
\le C_6\| \varphi \|_{W^{1,N+1}(\Omega)}
            \big(1+\| \nabla [\mathcal{D}_0(u^\ep)] \|_{L^2(\Omega)}\big) 
\]
for some $C_6>0$.
Plugging this inequality into \eqref{A}, we obtain
\[
\|  (Q(u^\ep))_t(t) \|_{(W^{1,N+1}(\Omega))'} 
\le C_6 \| (u^\ep)_t \|_{(H^1(\Omega))'}
             \big(1+\| \nabla [\mathcal{D}_0(u^\ep)] \|_{L^2(\Omega)}\big).
\]
By integration of this inequality over $(0,T)$, we arrive at 
the claim \eqref{Puep} by virtue of \eqref{Con:D0} and \eqref{Con:ueppat}.
Once \eqref{Puep} is established, the Aubin--Lions theorem entails 
the relative compactness of 
$\left\{Q(u^\ep) \right\}_{\ep \in (0,1)}$ in $L^2(\Omega \times (0,T))$.
We know that $\left\{u^\ep \right\}_{\ep \in (0,1)}$ is relatively compact in 
$L^p(\Omega \times (0,T))$ for each $p \in [1,\infty)$ 
since $Q$ is increasing on $[0,1]$, $\left\{Q(u^\ep) \right\}_{\ep \in (0,1)}$
is relatively compact 
in $L^2(\Omega \times (0,T))$ and \eqref{clabounds} holds.
Also, in light of the Arzel\`{a}--Ascoli theorem, 
$\left\{u^\ep\right\}_{\ep \in (0,1)}$ is relatively compact in
$C([0,T];(H^1(\Omega))')$, and \eqref{vepesti} implies that
$\left\{v^\ep \right\}_{\ep \in (0,1)}$ is relatively compact 
in $C([0,T];L^2(\Omega))$.
Therefore, invoking \eqref{clabounds} and \eqref{vepesti}, 
we conclude that there exist a sequence $\{\ep_k\}_{k\in\N}\subset(0,1)$ with $\ep_k \to 0$ as $k\to \infty$ and a couple 
$(u,v) \in L^\infty(\Omega \times (0,\infty)) 
              \times L^\infty_{\rm loc} (\overline{\Omega} \times [0,\infty))$ 
such that
\begin{align} \label{shuusoku1}
&(u^{\ep_k},v^{\ep_k}) \to (u,v) \quad \mbox{in} \ 
L^p(\Omega \times (0,T))\times L^2(\Omega \times (0,T)), \\
\label{shuusoku2}
&(u^{\ep_k},v^{\ep_k}) \to (u,v) \quad \mbox{in} \ 
C([0,T];(H^1(\Omega))') \times C([0,T];L^2(\Omega))
\end{align}
as $k \to \infty$, for each $p \in [1,\infty)$ and $T > 0$.

\medskip
\noindent
{\bf\boldmath Step 6. Conclusion.} We verify that the limit couple 
$(u,v)$ is a solution to the problem \eqref{Sys:Main} in the sense of 
Definition \ref{Def:WS}.
From \eqref{clabounds} and \eqref{shuusoku1} we can show that
$u(x,t)\in [0,1]$ and $v(x,t) \in [0,\infty)$ 
a.e.\ in $(x,t) \in \Omega \times (0,\infty)$. 
Also, for any fixed $T>0$, 
we have $v(x,t) \in [0,C_1(T)]$ 
a.e.\ in $(x,t) \in \overline{\Omega} \times [0,T]$ by  \eqref{vepbounds}.
As for (a) in Definition \ref{Def:WS}, we infer from \eqref{shuusoku2} 
that $u \in C_{\mathrm{w}}([0,\infty);L^2(\Omega))$.
We consider (b) in Definition \ref{Def:WS}.
We observe that $v \in C([0,\infty);L^2(\Omega))$ in \eqref{shuusoku2}, 
which along with
\eqref{vepesti} leads to $v \in L^{2}_{\mathrm{loc}}([0,\infty);H^2(\Omega))$.
Next, we confirm (c) in Definition \ref{Def:WS}. 
Due to \eqref{shuusoku1}, we see that $D^{\ep_k}(u^{\ep_k}, v^{\ep_k}) \to D(u, v)$ 
a.e.\ in $\Omega \times (0, T)$ as $k \to \infty$.
By \eqref{hanaDep}, \eqref{clabounds}, \eqref{Dep} and the fact that 
$\ep_k<1$, we have
\begin{equation*}
\mathcal{D}^{\ep_k}(u^{\ep_k},v^{\ep_k})
\le
\int_0^1D^{\ep_k}(r,v^{\ep_k}) \, dr
\le
\sup_{\substack{0\le r\le1 \\ 0\le s\le C_1(T)}}D(r,s)+1
\le C_7(T)
\end{equation*}
for some $C_7(T)>0$ independent of $k$.
Hence, the dominated convergence theorem yields
\begin{equation*} 
\mathcal{D}^{\ep_k}(u^{\ep_k}, v^{\ep_k}) \to \mathcal{D}(u, v) \quad 
\mbox{strongly in} \ L^2(0, T; L^2(\Omega)) 
\end{equation*}
as $k \to \infty$.
Hence, recalling that 
$\{\nabla[\mathcal{D}^{\ep_k}(u^{\ep_k}, v^{\ep_k})]\}_{k \in \N}$ 
is bounded in $L^2(0,T;(L^2(\Omega))^N)$ by \eqref{Con:con}, 
we see that $\mathcal{D}(u, v) \in L^2(0, T;H^1(\Omega))$ and
\begin{equation}\label{1}
\nabla[\mathcal{D}^{\ep_k}(u^{\ep_k}, v^{\ep_k})] \to \nabla[\mathcal{D}(u, v)] 
\quad \mbox{weakly in}\  L^2(0,T;(L^2(\Omega))^N)
\end{equation}
as $k \to \infty$.
Next, since
$(\mathcal{D}^{\ep_k})_s(u^{\ep_k},v^{\ep_k}) \to \mathcal{D}_s(u,v) \ 
\mbox{a.e. in} \ \Omega \times (0,T)$ as $k \to \infty$,
we deduce from \eqref{Con:hanaDeppas} that 
\begin{equation*} 
(\mathcal{D}^{\ep_k})_s(u^{\ep_k},v^{\ep_k}) \to \mathcal{D}_s(u,v) \quad 
\mbox{weakly$^*$ in} \ L^\infty(0,T; L^\infty(\Omega))
\end{equation*}
as $k \to \infty$.
According to \eqref{vepesti}, we have 
\begin{equation}\label{finally}
\nabla v^{\ep_k} \to \nabla v \quad \mbox{weakly in}\ L^2(0,T;(L^2(\Omega))^N)
\end{equation}
as $k \to \infty$.
Thus we obtain 
\begin{equation} \label{@}
(\mathcal{D}^{\ep_k})_s(u^{\ep_k},v^{\ep_k})\nabla v^{\ep_k} \to \mathcal{D}_s(u,v) \nabla v 
\quad \mbox{weakly in}\ L^2(0,T;(L^2(\Omega))^N)
\end{equation}
as $k \to \infty$.
Setting 
\[
D(u,v)\nabla u
 :=\nabla[\mathcal{D}(u,v)]-\mathcal{D}_s(u,v)\nabla v,
 \]
 we have 
 $D(u,v)\nabla u \in  L^2(0,T;(L^2(\Omega))^N)$ 
by relying on 
the facts $\mathcal{D}(u,v)\in L^2(0,T;H^1(\Omega))$ 
and $ \mathcal{D}_s(u,v) \nabla v \in L^2(0,T;(L^2(\Omega))^N)$.
Thus (c) in Definition \ref{Def:WS} is verified. 
Finally, we check (d) in Definition \ref{Def:WS}.
A combination of \eqref{1} and \eqref{@} yields
\begin{equation} \label{<}
D^{\ep_k} (u^{\ep_k}, v^{\ep_k})\nabla v^{\ep_k} \to D(u,v)\nabla v \quad 
\mbox{weakly in}\ L^2(0,T;(L^2(\Omega))^N)
\end{equation}
as $k \to \infty$.
On the other hand, we know that 
\begin{equation} \label{kokodemo}
h(u^{\ep_k},v^{\ep_k}) \nabla v^{\ep_k} \to h(u,v) \nabla v \quad 
\mbox{weakly in}\ L^2(0,T;(L^2(\Omega))^N)
\end{equation}
as $k \to \infty$.
Indeed, we have
$h(u^{\ep_k},v^{\ep_k}) \to h(u,v)$ a.e.\ in $\Omega \times (0,T)$ 
as $k \to \infty$ by \eqref{shuusoku1}.
Also, we see from \eqref{clabounds} and \eqref{shuusoku1} that 
$h(u^{\ep_k},v^{\ep_k}) \to h(u,v)$ weakly$^*$ in $ L^\infty(0,T; L^\infty(\Omega))$
as $k \to \infty$.
Therefore, we obtain \eqref{kokodemo} by \eqref{finally}.
From \eqref{shuusoku1} it follows that 
\begin{equation}\label{O}
u^{\ep_k} \to u \quad \mbox{weakly in}\ L^2(0,T;L^2(\Omega))
\end{equation}
as $k \to \infty$.
Let $\varphi \in H^1(0,T;H^1(\Omega))$ with $\varphi(T) = 0$.
Multiplying the first equation in \eqref{Sys:Mainkinji} by $\varphi$ 
and integrating it over $\Omega$, we obtain 
\begin{equation*} 
\frac{d}{dt}\int_\Omega u^{\ep_k} \varphi \, dx- \int_\Omega u^{\ep_k} \varphi_t \, dx
=-\int_\Omega  (D^{\ep_k}(u^{\ep_k},v^{\ep_k}) \nabla u^{\ep_k} 
                         - h(u^{\ep_k},v^{\ep_k}) \nabla v^{\ep_k}) \cdot \nabla \varphi \, dx
\end{equation*}
by the divergence theorem.
Integrating this identity over $(0,T)$, we observe that
\begin{equation*} 
-\!\int_0^T\!\! \int_\Omega u^{\ep_k} \varphi_t \, dxdt 
+\int_0^T \!\!\int_\Omega  (D^{\ep_k}(u^{\ep_k},v^{\ep_k}) \nabla u^{\ep_k} 
                           - h(u^{\ep_k},v^{\ep_k}) \nabla v^{\ep_k}) \cdot \nabla \varphi \, dxdt 
=\int_\Omega u^{\ep_k}_0 \varphi(0) \, dx
\end{equation*}
since $\varphi(T)=0$. 
Passing to the limit $k \to \infty$, we obtain
\begin{equation*}
-\int_0^T \int_\Omega u \varphi_t \, dxdt
+\int_0^T \int_\Omega  (D(u,v) \nabla u 
                                    - h(u,v) \nabla v) \cdot \nabla \varphi \, dxdt
=\int_\Omega u_0 \varphi(0) \, dx
\end{equation*}
from \eqref{<}, \eqref{kokodemo} and \eqref{O} together with \eqref{inikinji}.
Similarly, we can see that
\[
 -\int_0^T \int_\Omega v \varphi_t \, dxdt +
 \int_0^T \int_\Omega \nabla v \cdot \nabla \varphi \, dxdt 
 - \int_0^T \int_\Omega g(u,v) \, dxdt
 = \int_\Omega v_0 \varphi(0) \, dx.
\]
Finally, based on \eqref{shituryouhozonn2},
we can obtain \eqref{shituryouhozonn}.
This completes the proof of Theorem \ref{Thm:exist}.
\end{proof}
\section{Uniqueness of global weak solutions} \label{Sec:GWSU}
In this section we suppose that $D(r,s) = D(r)$
for all $r\in[0,1]$ and $s\in[0,\infty)$, and
in addition to \eqref{Con:D}, \eqref{Con:h}, \eqref{Con:g} 
and \eqref{Con:ini}, assume further
\eqref{SCon:Dh} and \eqref{SCon:g} and that $v_0 \in W^{2,p}(\Omega)$ 
with $p >N$.
\begin{proof}[Proof of Theorem \ref{Thm:unique} (Uniqueness)]
We let $\mathcal{N}w$ denote the unique solution 
$\varphi\in H^2(\Omega)$ of the problem 
\begin{equation}\label{Def:N}
\begin{cases}
  -\Delta\varphi = w,  &x\in\Omega, \\
  \nabla\varphi \cdot \nu = 0,   &x\in \partial \Omega,
\end{cases}
\end{equation}
with
\[
    \int_\Omega \varphi(x)\,dx = 0
\]
for $w \in L^2(\Omega)$ with $\int_\Omega w(x)\,dx = 0$.
Let $(u,v)$ and $(\widehat{u},\widehat{v})$ be global weak solutions 
of \eqref{Sys:Main} satisfying \eqref{shituryouhozonn} in the sense 
of Definition \ref{Def:WS}. Set
\[
U := u-\widehat{u} \quad \mbox{and} \quad
V := v-\widehat{v}.
\]
Let $T>0$ and $t \in [0,T]$.
Since $(u,v)$ and $(\widehat{u},\widehat{v})$ are global weak solutions, 
we see from
\cite[p.108, Proposition 2.1 (b)$\Rightarrow$(a)]{Showalter-1997} 
that for all $\psi \in L^2(0,t;H^1(\Omega))$,
\begin{align*}
&\int_0^t \int_\Omega U_t \psi \, dxds \\
\label{eq:SW}
&= -\int_0^t \int_\Omega 
(D(u)\nabla u - D(\widehat{u})\nabla \widehat{u} 
-(h(u,v)\nabla v-h(\widehat{u},\widehat{v})\nabla v + h(\widehat{u},\widehat{v})\nabla V))
\cdot \nabla \psi \, dxds.
\end{align*}
Since it is assumed that $u$ and $\hat{u}$ satisfy
\eqref{shituryouhozonn}, 
we have $\int_\Omega U \,dx=0$.
Taking $\psi = \mathcal{N}U \in L^2(0,t;H^1(\Omega))$ 
in the above identity and noting that
\[
\int_\Omega U_t (\mathcal{N}U) \, dx 
= \frac12\cdot \frac{d}{dt} \int_\Omega | \nabla(\mathcal{N}U)(t) |^2 \, dx
\]
and that $\int_\Omega |\nabla (\mathcal{N}U)(0)|^2 \, dx = 0$, we have 
\begin{align}
\nonumber
\frac12 \int_\Omega | \nabla(\mathcal{N}U)(t) |^2 \, dx 
&= -\int_0^t \int_\Omega 
(D(u)\nabla u - D(\widehat{u})\nabla \widehat{u}) \cdot \nabla (\mathcal{N}U)\, dxds \\
\nonumber
&\quad \, +\int_0^t \int_\Omega(h(u,v)\nabla u-h(\widehat{u},\widehat{v})\nabla v 
                                                + h(\widehat{u},\widehat{v})\nabla V)
\cdot \nabla (\mathcal{N}U) \, dxds \\[2mm]
\label{Def:I(t)}
&=: I(t).
\end{align}
Since $u$ and $v$ are bounded in $\Omega \times (0,t)$ 
and it is assumed that $v_0 \in W^{2,p}(\Omega)$ with $p >N$ and $\nabla v_0 \cdot \nu =0$ on $\partial \Omega$, 
the maximal Sobolev regularity 
for parabolic equations \cite[3.1 Theorem]{HP-1997-CPDE} 
implies that $v \in L^p(0,t;W^{2,p}(\Omega))$ for $p > N$.
Thus
we obtain $\nabla v \in L^2(0,T;(L^\infty(\Omega))^N)$
by the Sobolev embedding.
From the Green formula, the definition of $\mathcal{N}U$ (see \eqref{Def:N})
and the Schwarz inequality we obtain
\begin{align*}
I(t)
&= \int_0^t \int_\Omega (\mathcal{D}(u)-\mathcal{D}(\widehat{u})) 
\Delta(\mathcal{N}U)
\, dxds \\
&\quad \, + \int_0^t \int_\Omega (h(u,v)-h(\widehat{u},\widehat{v}))\nabla v
\cdot \nabla(\mathcal{N}U) \, dxds \\
&\quad \, + \int_0^t \int_\Omega h(\widehat{u},\widehat{v}) \nabla V
\cdot \nabla(\mathcal{N}U) \, dxds \\
&\le - \int_0^t \int_\Omega (\mathcal{D}(u)-\mathcal{D}(\widehat{u}))U \, dxds \\
&\quad \, + \int_0^t \| \nabla v\|_{L^\infty(\Omega)}
\| \nabla(\mathcal{N}U) \|_{L^2(\Omega)}
\| h(u,v)-h(\widehat{u},\widehat{v}) \|_{L^2(\Omega)} \, ds \\
&\quad \, + \| h \|_{L^\infty((0,1) \times (0,K))}
\int_0^t \| \nabla(\mathcal{N}U) \|_{L^2(\Omega)}
\| \nabla V \|_{L^2(\Omega)} \, ds,
\end{align*}
where $K := \max \{ \| v \|_{L^\infty(\Omega \times (0,T))},
\| \widehat{v} \|_{L^\infty(\Omega \times (0,T))} \}$.
Let $\delta > 0$.
Applying the Young inequality to the second and third terms 
on the right-hand side derives
\begin{align*}
I(t) &\le - \int_0^t \int_\Omega(\mathcal{D}(u)-\mathcal{D}(\widehat{u}))U \, dxds 
\\
&\quad \, + \frac{\delta}{2} \int_0^t \int_\Omega (h(u,v)-h(\widehat{u},\widehat{v}))^2 \, dxds
+ \frac{1}{2\delta} 
\int_0^t \| \nabla v\|_{L^\infty(\Omega)}^2
\| \nabla(\mathcal{N}U) \|_{L^2(\Omega)}^2 \, ds \\
&\quad \, + \frac{\delta}{2} \int_0^t \| \nabla V \|_{L^2(\Omega)}^2 \, ds
+ \frac{1}{2\delta} \| h \|_{L^\infty((0,1) \times (0,K))}^2
\int_0^t \| \nabla(\mathcal{N}U) \|_{L^2(\Omega)}^2 \, ds .
\end{align*}
In light of the condition \eqref{SCon:Dh}, we obtain
\begin{align*}
I(t) &\le - \int_0^t \int_\Omega (\mathcal{D}(u)-\mathcal{D}(\widehat{u}))U \, dxds \\
&\quad \, 
+ \frac{\delta}{2} C_0 
   \int_0^t \int_\Omega (\mathcal{D}(u)-\mathcal{D}(\widehat{u}))U \, dxds
 + \frac{\delta}{2} C_1 \int_0^t \int_\Omega V^2 \, dxds \\
&\quad \, 
+ \frac{1}{2\delta} 
\int_0^t \| \nabla v\|_{L^\infty(\Omega)}^2
\| \nabla(\mathcal{N}U) \|_{L^2(\Omega)}^2 \, ds \\
&\quad \,
+ \frac{\delta}{2} \int_0^t \| \nabla V \|_{L^2(\Omega)}^2 \, ds
+ \frac{1}{2\delta} \| h \|_{L^\infty((0,1) \times (0,K))}^2
\int_0^t \| \nabla(\mathcal{N}U) \|_{L^2(\Omega)}^2 \, ds.
\end{align*}
Fixing $\delta$ as $0<\delta<\min\big\{\frac{2}{C_0},1\big\}$, 
we have
\begin{align}
\nonumber
I(t) &\le
\frac{\delta}{2} C_1 \int_0^t \int_\Omega V^2 \, dxds
+ \frac{1}{2\delta} \int_0^t 
\| \nabla v\|_{L^\infty(\Omega)}^2
\| \nabla(\mathcal{N}U) \|_{L^2(\Omega)}^2 \, ds \\
\label{ine:I(t)}
&\quad \,
+ \frac{\delta}{2}
\int_0^t \| \nabla V \|_{L^2(\Omega)}^2 \, ds
+ \frac{1}{2\delta}
\| h \|_{L^\infty((0,1) \times (0,K))}^2
\int_0^t \| \nabla(\mathcal{N}U) \|_{L^2(\Omega)}^2 \, ds.
\end{align}
From \eqref{Def:I(t)} and \eqref{ine:I(t)} it follows that 
\begin{align}
\nonumber
\| \nabla (\mathcal{N}U)(t) \|_{L^2(\Omega)}^2 
&\le 
\delta C_1 \int_0^t \int_\Omega V^2 \, dxds
+ \frac{1}{\delta}\int_0^t \| \nabla v\|_{L^\infty(\Omega)}^2
\| \nabla(\mathcal{N}U) \|_{L^2(\Omega)}^2 \, ds \\
\label{ine:nabNU}
&\quad \, 
+ \delta
\int_0^t \| \nabla V \|_{L^2(\Omega)}^2 \, ds
+ \frac{1}{\delta}
\| h \|_{L^\infty((0,1) \times (0,K))}^2
\int_0^t \| \nabla(\mathcal{N}U) \|_{L^2(\Omega)}^2 \, ds.
\end{align}
Similarly, we see from (d) in Definition \ref{Def:WS}, 
\cite[p.108, Proposition 2.1 (b)$\Rightarrow$(a)]{Showalter-1997} and
\eqref{SCon:g} that
\begin{align}
\nonumber
&\| V(t) \|_{L^2(\Omega)}^2
+ \int_0^t \| \nabla V \|_{L^2(\Omega)}^2 \, ds \\
\label{ine:FS}
&\le C(T) \int_0^t \| V \|_{L^2(\Omega)}^2 \, ds
+ C(T) \int_0^t \big(1+\| \nabla v \|_{L^\infty(\Omega)}^2\big)
\| \nabla(\mathcal{N}U) \|_{L^2(\Omega)}^2 \, ds,
\end{align}
where $C(T)$ is a positive constant depending on $T$.
Thanks to \eqref{ine:nabNU} and \eqref{ine:FS}, we have
\begin{align*} 
\nonumber
&\| V(t) \|_{L^2(\Omega)}^2 + \| \nabla(\mathcal{N}U)(t) \|_{L^2(\Omega)}^2 
\\
\label{ine:Gron}
&\le C(T) \int_0^t \big(2+\| \nabla v \|_{L^\infty(\Omega)}^2\big)
\big(\| V \|_{L^2(\Omega)}^2 +
\| \nabla(\mathcal{N}U) \|_{L^2(\Omega)}^2\big) \, ds.
\end{align*}
Recalling that $\nabla v \in L^2(0, T; (L^\infty(\Omega))^N)$ as noted above, 
we can use the Gronwall lemma to see that
$ V(t) = \nabla(\mathcal{N}U)(t) = 0$ for every $t \in [0,T]$.
Therefore, from \eqref{Def:N} and the Green formula we conclude that
\[
V(t) =U(t) = 0
\]
for all $t \in [0,T]$,
which completes the proof.
\end{proof}
\section{Stationary solutions} \label{Sec:SS}
In this section we construct a flat-hump-shaped stationary solution 
of \eqref{Sys:Main} in the one-dimensional setting.
\subsection{Basic assumption, definition and properties}
Throughout this section, we put 
\begin{equation} \label{Eq:g}
g(r,s) := \gamma r - \beta s
\end{equation}
for some $\beta > 0$ and $\gamma > 0$, and assume that 
\[
D(r,s) = D_1(r) D_2(s) \quad
\mbox{and}
\quad h(r,s) = h_1(r) h_2(s)
\]
for all $r\in[0,1]$ and $s\in[0,\infty)$,
where
\begin{align} \label{Con:Dh1}
& \begin{cases}
 D_1, h_1 \in C^2([0,1]), \\
 D_1(1) = 0, \quad D_1(r) > 0 \quad (r \in [0,1)), \\
 h_1(0) = h_1(1) = 0, \quad h_1(r) > 0 \quad (r \in (0,1)),
 \end{cases}
\\[2mm]
\label{Con:Dh2}
 & \begin{cases}
 D_2, h_2 \in C^2([0,\infty)), \\
 D_2(s) > 0 \quad (s \in [0,\infty)), \\
 h_2(s) > 0 \quad (s \in [0,\infty)).
 \end{cases}
\end{align}
\begin{df}[Stationary solutions] \label{Df:SS} 
A couple $(u,v)$ will be called 
a \textit{stationary solution} of \eqref{Sys:Main} if 
 \begin{enumerate}[(i)]
 \item $u,v \in L^\infty(\Omega), \ \, v \in H^2(\Omega), \ \, \mathcal{D}_1(u) 
 \in H^1(\Omega)$, 
 where $\mathcal{D}_1(r) := \int_0^r D_1(\sigma) \, d\sigma$,
 \item a couple $(u,v)$ satisfies
 \begin{empheq}[left={\empheqlbrace}]{alignat=2}
\label{Con:uvbounds}
  & (u(x),v(x)) \in [0,1] \times [0,\infty) &\qquad 
  &\mbox{a.e.\ in} \; \Omega, \\
\label{Eq:1st}
  & D_2(v) \nabla[\mathcal{D}(u)] - h_1(u) h_2(v) \nabla v = 0 & 
  &\mbox{a.e.\  in} \; \Omega, \\
  \label{Eq:2nd}
  & -\Delta v + \beta v = \gamma u & 
  &\mbox{a.e.\ in} \; \Omega, \\
  \label{Eq:B}
  & \nabla v \cdot \nu = 0 & 
  &\mbox{a.e.\ on} \; \partial\Omega. 
\end{empheq}
 \end{enumerate}
If the couple $(u,v)$ additionally satisfies
\[
\int_\Omega u(x) \, dx = \frac{\beta}{\gamma} \int_\Omega v(x) \, dx = M
\]
for $M \in [0,|\Omega|]$, then the couple $(u,v)$ will be called
a \textit{stationary solution of \eqref{Sys:Main} 
with mass $(M,\frac{\gamma M}{\beta})$}. 
\end{df}
Let us define 
\begin{align*}
j(r,s) &:= \int_{\frac12}^{r} \frac{D_1(\sigma)}{h_1(\sigma)} \, d\sigma
- \int_0^s \frac{h_2(\sigma)}{D_2(\sigma)} \, d\sigma, \\
j_1(r) &:= \int_{\frac12}^{r} \frac{D_1(\sigma)}{h_1(\sigma)} \, d\sigma, \\
j_2(s) &:= \int_0^s \frac{h_2(\sigma)}{D_2(\sigma)} \, d\sigma.
\end{align*}
for $(r,s)\in [0,1]\times [0,\infty)$.
Then we give the basic properties for $j_1$, $j_2$ and $j$.
\begin{lem} \label{L}
The function $j_1$ is a strictly increasing function from $(0,1)$ onto 
$(-\infty,j_1(1))$.
Moreover we have $\frac{D_1}{h_1} 
\notin L^1\big(0,\frac12\big)$. 
Also, the function $j_2$ is a strictly 
increasing function from $(0,\infty)$ onto $(0,\infty)$.
\end{lem}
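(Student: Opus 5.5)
The plan is to treat $j_1$ and $j_2$ separately. In each case I use only the sign and regularity information in \eqref{Con:Dh1} and \eqref{Con:Dh2}, together with the fundamental theorem of calculus.

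For $j_1$, the integrand $D_1/h_1$ is continuous and strictly positive on $(0,1)$, since $D_1>0$ on $[0,1)$ and $h_1>0$ on $(0,1)$. Hence $j_1\in C^1((0,1))$ with $j_1'=D_1/h_1>0$, so $j_1$ is strictly increasing and continuous. Its range on $(0,1)$ is therefore the open interval
\[
\Bigl(\lim_{r\downarrow0}j_1(r),\ \lim_{r\uparrow1}j_1(r)\Bigr),
\]
and the upper endpoint is what the statement calls $j_1(1)$, understood as a limit with value in $(-\infty,\infty]$.

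The key point for $j_1$ is the non-integrability $\frac{D_1}{h_1}\notin L^1(0,\frac12)$. Since $h_1\in C^2([0,1])$ and $h_1(0)=0$, the mean value theorem gives $h_1(\sigma)\le \|h_1'\|_{L^\infty(0,1)}\,\sigma$. Since $D_1(0)>0$ and $D_1$ is continuous, there is $\sigma_0\in(0,\frac12)$ with $D_1\ge D_1(0)/2$ on $[0,\sigma_0]$. Thus
\[
\frac{D_1(\sigma)}{h_1(\sigma)}\ge \frac{c}{\sigma}\quad\text{on }(0,\sigma_0],\qquad c:=\frac{D_1(0)}{2\|h_1'\|_{L^\infty(0,1)}}.
\]
(If $h_1'\equiv0$ then $h_1\equiv0$, which contradicts $h_1>0$ on $(0,1)$.) Because $1/\sigma\notin L^1(0,\sigma_0)$, we get $\frac{D_1}{h_1}\notin L^1(0,\frac12)$. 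This yields $j_1(r)=-\int_r^{1/2}\frac{D_1}{h_1}\,d\sigma\to-\infty$ as $r\downarrow0$, which gives the lower endpoint $-\infty$ and completes the claim for $j_1$.

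For $j_2$, the integrand $h_2/D_2$ is continuous and strictly positive on $[0,\infty)$. Hence $j_2\in C^1([0,\infty))$, $j_2(0)=0$ and $j_2'>0$, so $j_2$ is strictly increasing and maps $(0,\infty)$ continuously onto $\bigl(0,\int_0^\infty\frac{h_2}{D_2}\,d\sigma\bigr)$. It remains to show that $\int_0^\infty\frac{h_2}{D_2}\,d\sigma=\infty$, which is the main obstacle. Pointwise positivity and continuity bound $h_2/D_2$ below only on compact sets, so they do not by themselves force divergence at infinity.

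My first attempt would be to find a positive lower bound for $h_2/D_2$ on $[0,\infty)$, or a bound of the form $\frac{h_2}{D_2}\ge c/(1+s)$, from the standing assumptions on $h_2$ and $D_2$, and then integrate it. If \eqref{Con:Dh2} alone does not give such a bound, for instance when $h_2(s)=e^{-s}$ and $D_2\equiv1$, then the surjectivity onto $(0,\infty)$ must come from an additional growth property of $h_2/D_2$. I would isolate that property explicitly at this step, because the rest of the argument is complete without it.
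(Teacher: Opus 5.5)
Your argument for $j_1$ is essentially the paper's. The paper also uses the mean value theorem and $h_1(0)=0$ to get
$\frac{D_1(\sigma)}{h_1(\sigma)}\ge \frac{\min_{[0,\frac12]}D_1}{\|h_1'\|_{L^\infty(0,1)}}\cdot\frac{1}{\sigma}$ on $(0,\frac12)$. The only difference is that it uses the minimum of $D_1$ over $[0,\frac12]$ instead of your local bound $D_1\ge D_1(0)/2$ near $0$. The paper calls the monotonicity and the range ``clear''. You spell out the continuity and limit argument giving the range $(-\infty,j_1(1))$ with $j_1(1)\in(-\infty,\infty]$, and that argument is correct.

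For $j_2$ you have not missed an idea. The paper's proof only asserts that $j_2$ is strictly increasing and gives no argument for surjectivity, and your example shows none can exist. Take $D_2\equiv 1$ and $h_2(s)=e^{-s}$. Both conditions in \eqref{Con:Dh2} hold, yet $j_2(s)=1-e^{-s}$ maps $(0,\infty)$ onto $(0,1)$. So the last assertion of the lemma is false under the stated hypotheses.

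You should therefore not look for a lower bound such as $c/(1+s)$; your own example shows it cannot be derived. State the conclusion as a correction to the lemma. Either conclude only that $j_2$ maps $(0,\infty)$ onto $\bigl(0,\int_0^\infty\frac{h_2}{D_2}\,d\sigma\bigr)$, or add the hypothesis $\frac{h_2}{D_2}\notin L^1(0,\infty)$. That hypothesis holds in the paper's own example, where $h_2/D_2=e^s$ and $j_2(s)=e^s-1$. The correction matters later in the paper: $v_\lambda=j_2^{-1}(j_1(1)-\lambda)$ is only well defined when $j_1(1)-\lambda$ lies in the range of $j_2$.
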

\begin{proof}\label{P1}
It is clear that $j_1$ and $j_2$ is a strictly increasing 
by \eqref{Con:Dh1} and \eqref{Con:Dh2}.
Moreover, by the mean value theorem, we see that 
\[
\frac{D_1(\sigma)}{h_1(\sigma)} 
\ge \frac{\min_{[0,\frac12]}D_1}{\| h_1' \|_{L^\infty(0,1)}}
\cdot \frac{1}{\sigma}
\]
for $\sigma \in \big(0,\frac12\big)$, and hence $\frac{D_1}{h_1} 
\notin L^1\big(0,\frac12\big)$.
\end{proof}
\begin{prop} \label{P2}
Let $M \in (0,|\Omega|)$ and let $(u,v)$ be a stationary solution 
of \eqref{Sys:Main} with mass $(M,\frac{\gamma M}{\beta})$.
Set $\Omega_u := \{ x \in \Omega \,| \, u(x) \in [0,1) \}$. 
Then $\Omega_u$ is an open subset of $\Omega$ 
and $\Omega_u \neq \emptyset$. 
Also, the function $j(u,v)$ is constant 
on each connected component of $\Omega_u$ 
and $u \in C(\overline{\Omega}) \cap C^1(\Omega_u)$.
Moreover, $u(x) > 0$ for all $x \in \overline{\Omega}$ and 
\begin{equation} \label{Con:vbound}
v(x) \in \left[0,\frac{\gamma}{\beta}\right] \quad 
\forall\, x \in \overline{\Omega}.
\end{equation}
\end{prop}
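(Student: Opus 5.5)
We work in the one-dimensional setting of this section, with $\Omega=(a,b)$ a bounded interval. The plan is to exploit $v\in H^2(\Omega)\subset C^1(\overline\Omega)$ and $\mathcal D_1(u)\in H^1(\Omega)\subset C(\overline\Omega)$ (here $\mathcal D$ in \eqref{Eq:1st} is $\mathcal D_1$). Since $\mathcal D_1$ is strictly increasing and continuous on $[0,1]$, $u=\mathcal D_1^{-1}(\mathcal D_1(u))$ has a continuous representative, so $u\in C(\overline\Omega)$. Hence $\Omega_u=u^{-1}([0,1))$ is relatively open in $\Omega$. If $\Omega_u=\emptyset$, then $u\equiv1$ and $\int_\Omega u=|\Omega|>M$, a contradiction; so $\Omega_u\neq\emptyset$.

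\textbf{Step 1 ($j(u,v)$ is constant on components of $\Omega_u$, and $u\in C^1(\Omega_u)$).} On a component $I$ of $\Omega_u$, take any compact subinterval $J$. There $u\le 1-\eta$, so $D_1(u)\ge c>0$. Then $\mathcal D_1^{-1}$ is $C^1$ with bounded derivative on $[0,\mathcal D_1(1-\eta)]$, which gives $u\in H^1(J)$ with $\nabla\mathcal D_1(u)=D_1(u)u_x$. Equation \eqref{Eq:1st} then reads
\[
D_2(v)D_1(u)u_x=h_1(u)h_2(v)v_x .
\]
This gives $u_x=h_1(u)h_2(v)v_x/(D_1(u)D_2(v))$, which is continuous. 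Therefore $u\in C^1(J)$, and hence $u\in C^1(\Omega_u)$.

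Next we must show $u>0$ on $I$ before dividing by $h_1(u)$. The equation has the form $u_x=F(u,x)$ with $F$ Lipschitz in $u$ near $u=0$, since $h_1\in C^2$ and $h_1(0)=0$. So if $u(x_0)=0$ at some point, uniqueness for this ODE forces $u\equiv0$ on $I$. The case $u\equiv 0$ on all of $\Omega$ is excluded by $M>0$. If instead $I\neq\Omega$, the component $I$ has an endpoint in $\Omega$ where $u=1$, which contradicts continuity of $u$. Hence $u>0$ on $I$. Dividing by $h_1(u)D_2(v)$ gives $\frac{d}{dx}[j_1(u)-j_2(v)]=0$, so $j(u,v)$ is constant on $I$.

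\textbf{Step 2 ($u>0$ on $\overline\Omega$).} Points with $u=1$ are already positive. Suppose $u(x_0)=0$ with $x_0\in\overline\Omega$. Near $x_0$ we have $u<1$, so $x_0$ is a limit of points of a single component $I$, on which $j_1(u)=c+j_2(v)$. Since $v$ is bounded on $\overline\Omega$, the right-hand side stays bounded. But $j_1(u)\to-\infty$ as $u\to0$ by Lemma \ref{L}, a contradiction. So $u>0$ everywhere. This is the step I expect to be the crux: one must handle the possibility that $u$ vanishes at the boundary of a component, and this is exactly what the non-integrability $D_1/h_1\notin L^1(0,\frac12)$ from Lemma \ref{L} rules out.

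\textbf{Step 3 (bound \eqref{Con:vbound}).} Since $v\in C^1(\overline\Omega)$, it attains its maximum at some $x_1$. In one dimension $v\in H^2$, and \eqref{Eq:2nd} gives $v''=\beta v-\gamma u\in C(\overline\Omega)$, so $v\in C^2(\overline\Omega)$. If $x_1$ is interior, then $v''(x_1)\le0$. If $x_1$ is an endpoint, the Neumann condition $v'(x_1)=0$ together with maximality again gives $v''(x_1)\le0$; otherwise $v''>0$ would make $v$ strictly increase into the interior, contradicting maximality. In either case $\beta v(x_1)=\gamma u(x_1)+v''(x_1)\le\gamma$, so $v\le \gamma/\beta$. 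Finally $v\ge0$ holds by \eqref{Con:uvbounds}, and by continuity this extends to all of $\overline\Omega$.
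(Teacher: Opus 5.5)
Within $N=1$ your argument is correct, but Proposition~\ref{P2} is not a one-dimensional statement, so as written the proposal proves only a special case. The restriction $\Omega=(0,l)$ is imposed only at the beginning of Subsection~\ref{Sec:SS2}. Proposition~\ref{P2} comes before it and concerns a general smoothly bounded $\Omega\subset\R^N$; the paper's proof explicitly takes $p>N$.

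For $N\ge2$ your opening step fails. $\mathcal{D}_1(u)\in H^1(\Omega)$ no longer gives a continuous representative, and $v\in H^2(\Omega)$ no longer gives $v\in C^1(\overline\Omega)$. So neither $u\in C(\overline\Omega)$ nor the openness of $\Omega_u$ is available, and everything after depends on them.

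The missing idea is to bootstrap through \eqref{Eq:1st}:
\begin{itemize}
\item Since $u\in L^\infty(\Omega)$, elliptic $L^p$-regularity for \eqref{Eq:2nd}--\eqref{Eq:B} gives $v\in W^{2,p}(\Omega)$ for all $p<\infty$, hence $v\in C^1(\overline\Omega)$.
\item Then \eqref{Eq:1st} gives $\nabla[\mathcal{D}_1(u)]=h_1(u)h_2(v)\nabla v/D_2(v)\in(L^\infty(\Omega))^N$.
\item So $\mathcal{D}_1(u)\in W^{1,\infty}(\Omega)$ is continuous on $\overline\Omega$, and then even $C^1(\overline\Omega)$.
\end{itemize}
Only at this point does $u=\mathcal{D}_1^{-1}(\mathcal{D}_1(u))$ become continuous.

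The later steps also rely on the interval structure and have to be redone.

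\textbf{Positivity of $u$.} Components of $\Omega_u$ are arbitrary connected open sets, so ODE uniqueness and ``endpoints'' are not available. Instead, use $|\nabla u|\le C h_1(u)\le C'u$ on $\{u\le 1-\eta\}$, integrated along segments with Gronwall. This shows $\{u=0\}$ is open and closed in each component $I$. Your endpoint argument then becomes: if $I\ne\Omega$, connectedness of $\Omega$ gives a point of $\partial I\cap\Omega$, and $u=1$ there.

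\textbf{The bound on $v$.} The pointwise maximum principle at a maximum of a $C^2$ function is unavailable, since $v$ is only $W^{2,p}$. Instead, test \eqref{Eq:2nd} with $(v-\frac{\gamma}{\beta})_+$ and use \eqref{Eq:B} to get
$\int_\Omega|\nabla(v-\frac{\gamma}{\beta})_+|^2\,dx+\beta\int_\Omega(v-\frac{\gamma}{\beta})_+^2\,dx=\gamma\int_\Omega(u-1)(v-\frac{\gamma}{\beta})_+\,dx\le0$.
This is the comparison principle the paper invokes.

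With these changes your route works in general. Your uniqueness-based positivity argument is in fact more explicit than the paper's. The paper bounds $u$ from below only on closures of components of $\{0<u<1\}$ and leaves implicit why every zero of $u$ must lie in such a closure.
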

\begin{proof}
From \eqref{Eq:2nd}, \eqref{Eq:B} 
and the fact $u \in L^\infty(\Omega)$ along with
the standard regularity result for elliptic equations, we have 
$v \in W^{2,p}(\Omega)$ for all $p \in (1,\infty)$.
Taking $p$ such that $p>N$, we obtain $\nabla v \in L^\infty(\Omega)$
by the Sobolev embedding.
Since $D_2(v) > 0$, it follows from \eqref{Eq:1st} and \eqref{Eq:1st} that
$\nabla [\mathcal{D}_1(u)]
= \frac{1}{D_2(v)}h_1(u) h_2(v) \nabla v \in (L^\infty(\Omega))^N$.
Thus $\mathcal{D}_1(u) \in W^{1,\infty}(\Omega)$.
Using the Rellich--Kondrachov theorem, 
we have $\mathcal{D}_1(u) \in C(\overline{\Omega})$, 
which together with the fact that $\mathcal{D}_1$ is increasing
yields $u \in C(\overline{\Omega})$.
Taking $p$ large enough, we obtain $v\in C^1(\overline{\Omega})$.
Again by the identity
$\nabla [\mathcal{D}_1(u)]
= \frac{1}{D_2(v)}h_1(u) h_2(v) \nabla v$, we have 
$\mathcal{D}_1(u) \in C^1(\overline{\Omega})$.
By the assumptions that $u \ge 0$ in $\Omega$ 
and that $M<|\Omega|$, we make sure that 
$\Omega_u$ is an open subset of $\Omega$ 
and $\Omega_u \neq \emptyset$.
Since $\mathcal{D}_1$ is strictly increasing on $\Omega_u$, 
we observe that $u \in C^1(\Omega_u)$.

Let $\Lambda$ be a connected component 
of $\{ x \in \Omega \,|\, u(x) \in (0,1) \}$.
Then \eqref{Eq:1st} implies $\nabla[j(u,v)](x)=0$ for all $x\in \Lambda$. 
Therefore, there is $\lambda \in \R$ such that 
$j_1(u(x))=j_2(v(x))+\lambda$ for all $x\in \Lambda$.
Due to Lemma \ref{L}, it follows that 
\[
u(x)=j_1^{-1}(j_2(v(x))+\lambda) >j_1^{-1}(\lambda)>0
\]
for all $x\in \Lambda$. 
In light of the fact $u \in C(\overline{\Omega})$, we have 
$u(x)\ge j_1^{-1}(\lambda)>0$ for all $x \in \overline{\Lambda}$.
Therefore, $u(x) > 0$ for all $x \in \overline{\Omega}$ and 
$\Omega_u := \{ x \in \Omega \,| \, u(x) \in (0,1) \}$.

Finally, the comparison principle for 
\eqref{Eq:2nd} and \eqref{Eq:B} leads to \eqref{Con:vbound}.
\end{proof}

In the case that $\frac{D_1}{h_1} \notin L^1(\frac{1}{2},1)$, 
we can give lower and upper bounds for $u$, which generalizes 
\cite[Proposition 8]{LW-2005-PNDEA}.
\begin{prop}\label{*}
Let $M\in (0,|\Omega|)$ and let the couple $(u,v)$ be a stationary solution 
of \eqref{Sys:Main} with mass $(M,\frac{\gamma M}{\beta})$.
Assume that $j_1(1)= \infty$. Then we have 
\[
0<j_1^{-1}\left(j_1\left(\frac{M}{|\Omega|}\right)
                     -j_2\left(\frac{\gamma}{\beta}\right)\right)\le u(x)
\le j_1^{-1}\left(j_1\left(\frac{M}{|\Omega|}\right)
                       +j_2\left(\frac{\gamma}{\beta}\right)\right)<1
\]
for all $x\in \overline{\Omega}$.
\end{prop}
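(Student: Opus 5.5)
The plan is to show first that, when $j_1(1)=\infty$, the set $\{u=1\}$ is empty. Then $\Omega_u=\Omega$, and $j(u,v)$ is a single constant $\lambda$ on all of $\Omega$. The mass constraint then pins $\lambda$ inside an explicit interval, and the bounds for $u$ follow by monotonicity of $j_1^{-1}$. Throughout I use Proposition \ref{P2}: $u\in C(\overline{\Omega})$, $u>0$, $j(u,v)$ is constant on each connected component of $\Omega_u=\{x\in\Omega\,|\,u(x)\in(0,1)\}$, and \eqref{Con:vbound} gives $0\le v\le \gamma/\beta$. I also use Lemma \ref{L}: since $j_1(1)=\infty$, $j_1$ is a strictly increasing bijection from $(0,1)$ onto $(-\infty,\infty)$, and $j_2$ is increasing.

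\emph{Step 1 ($u<1$ everywhere; this is the main obstacle).} Let $\Lambda$ be a connected component of $\Omega_u$, which is nonempty by Proposition \ref{P2}. On $\Lambda$ we have $j_1(u)=j_2(v)+\lambda_\Lambda$ for some constant $\lambda_\Lambda\in\R$. Hence
\[
u\le j_1^{-1}\big(j_2(\tfrac{\gamma}{\beta})+\lambda_\Lambda\big)=:\theta_\Lambda<1 \quad\mbox{on } \Lambda,
\]
where $\theta_\Lambda<1$ because $j_1^{-1}$ takes values in $(0,1)$. By continuity of $u$ the same bound holds on $\overline{\Lambda}$.

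Suppose $\Lambda\neq\Omega$. Since $\Omega$ is a domain (connected; in this section an interval), $\Lambda$ is open and not closed in $\Omega$. So there is a point $x_0\in\partial\Lambda\cap\Omega$. Such a point cannot lie in $\Omega_u$, for otherwise it would belong to the open component $\Lambda$. Since $u>0$, this forces $u(x_0)=1$, contradicting $u(x_0)\le\theta_\Lambda<1$. Therefore $\Omega_u=\Omega$, and $j_1(u)=j_2(v)+\lambda$ on $\Omega$ with a single constant $\lambda$. This extends to $\overline{\Omega}$ by continuity of $u$ and $v$, the latter lying in $C^1(\overline{\Omega})$ by the proof of Proposition \ref{P2}.

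\emph{Step 2 (locating $\lambda$ via the mass).} From $0\le j_2(v)\le j_2(\gamma/\beta)$ we get
\[
j_1^{-1}(\lambda)\le u(x)\le j_1^{-1}\big(\lambda+j_2(\tfrac{\gamma}{\beta})\big)\quad\mbox{on }\overline{\Omega}.
\]
Averaging over $\Omega$ and using $\int_\Omega u\,dx=M$ gives
\[
j_1^{-1}(\lambda)\le \frac{M}{|\Omega|}\le j_1^{-1}\big(\lambda+j_2(\tfrac{\gamma}{\beta})\big).
\]
Applying the increasing map $j_1$ (note $M/|\Omega|\in(0,1)$) yields
\[
j_1\Big(\frac{M}{|\Omega|}\Big)-j_2\Big(\frac{\gamma}{\beta}\Big)\le\lambda\le j_1\Big(\frac{M}{|\Omega|}\Big).
\]

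\emph{Step 3 (conclusion).} Insert the lower bound on $\lambda$ into $u\ge j_1^{-1}(\lambda)$, and the upper bound on $\lambda$ into $u\le j_1^{-1}(\lambda+j_2(\gamma/\beta))$. This gives exactly the two inequalities of the statement. The outer strict inequalities $0<\cdot$ and $\cdot<1$ hold because $j_1^{-1}$ maps $\R$ into $(0,1)$. The only delicate point is Step 1, which uses both connectedness of $\Omega$ and the hypothesis $j_1(1)=\infty$, so that $j_1^{-1}$ of any finite number stays strictly below $1$.
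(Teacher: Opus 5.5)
Your proposal is correct and follows the paper's proof: on a component $\Lambda$ of $\Omega_u$ the bound $u\le j_1^{-1}\big(j_2(\gamma/\beta)+\lambda\big)<1$, together with continuity of $u$, forces $\Omega_u=\Omega$. The mass constraint then confines $\lambda$ to $\big[j_1(M/|\Omega|)-j_2(\gamma/\beta),\,j_1(M/|\Omega|)\big]$, and your Step 1 just spells out the connectedness argument that the paper compresses into one sentence.
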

\begin{proof}
Let $\Lambda$ be a set as defined 
in the proof Proposition \ref{P2}. 
Then 
there exists $\lambda\in \R$ such that
\begin{equation} \label{Eq}
j_1(u(x))=j_2(v(x))+\lambda 
\end{equation}
for all $x\in \Lambda$.
In view of Lemma \ref{L}, the property \eqref{Con:vbound} 
and the assumption $j_1(1)=\infty$, we obtain 
\[
u(x)\le j_1^{-1}\left(j_2\left(\frac{\gamma}{\beta}\right)+\lambda\right) <1
\]
for all $x \in \Lambda$. 
Since $u \in C(\overline{\Omega})$ by Proposition \ref{P2}, 
this implies that $\Lambda=\Omega_u=\Omega$. 
Thus by assumption, we see from \eqref{Con:vbound} and \eqref{Eq} that
\[
M=\int_\Omega u(x) \, dx 
=\int_\Omega  j_1^{-1}\left(j_2\left(v\right)+\lambda\right) \, dx
\in \left[|\Omega|j_1^{-1}(\lambda), 
             |\Omega|j_1^{-1}\left(j_2\left(\frac{\gamma}{\beta}\right)
            +\lambda\right)\right],
\]
from which we have
\[
\lambda \in \left[ j_1\left(\frac{M}{|\Omega|}\right) 
                - j_2\left(\frac{\gamma}{\beta}\right), 
                         j_1\left(\frac{M}{|\Omega|}\right)\right],
\]
which along with Lemma \ref{L} 
and \eqref{Eq} leads to the conclusion.
\end{proof}

\subsection{Flat-hump-shaped stationary solutions} \label{Sec:SS2}
In what follows, we focus on the one-dimensional setting
\[
\Omega = (0,l), \quad l>0,
\]
and assume that $\frac{D_1}{h_1} \in L^1\big(\frac{1}{2},1\big)$.
As a preparation, we introduce the function 
\[
f_{\lambda}(s):=j_1^{-1}(j_2(s)+\lambda), \quad s \le v_\lambda
:=j_2^{-1}(j_1(1)-\lambda)
\]
for $\lambda\in \R$, and let
\begin{equation*}
\overline{f_\lambda}(s):=
\begin{cases}
f_\lambda(s) \quad &\mbox{for} \ s \le v_\lambda, \\
1 &\mbox{for} \ s \ge v_\lambda
\end{cases}
\end{equation*}
for $\lambda\in \R$.
In order to state our result we need the following condition 
on the triplet $(\lambda, \gamma, \beta) \in \R \times (0,\infty)^2$:
\begin{equation}
\exists \, \rho_{-1}>0 \ \exists \, \rho_0>0
\quad \mbox{s.t.\quad}
\\[3mm]
\begin{cases}\label{C}
\rho_{-1}<\rho_0<v_\lambda<\dfrac{\gamma}{\beta},&  \\
\overline{f_\lambda}(\rho_i)=\dfrac{\beta}{\gamma}\rho_i \quad 
\mbox{for} \ i \in \{-1,0\}, & \\[3mm]
\overline{f_\lambda}(s)>\dfrac{\beta}{\gamma}s
\quad \ \mbox{for} \ s \in \left(\rho_0,\dfrac{\gamma}{\beta}\right),&  \\[3mm]
\overline{f_\lambda}(s)<\displaystyle\frac{\beta}{\gamma}s
\quad \  \mbox{for} \ s \in \displaystyle(\rho_{-1},\rho_0).& 
\end{cases}
\end{equation}
\begin{thm}[Existence of flat-hump-shaped stationary solutions] \label{SSE}
Suppose that the triplet $(\lambda,\gamma, \beta)\in \R \times (0,\infty)^2$ 
fulfills the condition \eqref{C}.
Assume further that
\begin{equation} \label{jyoukenn}
\frac{1}{v_\lambda-\rho_{-1}}\int_{\rho_{-1}}^{v_\lambda} f_\lambda(s) \, ds 
< \frac{\beta}{\gamma} \cdot \frac{v_\lambda-\rho_{-1}}{2}.
\end{equation}
Then for $l>0$ large enough, 
there is a flat-hump-shaped stationary solution $(u,v)$ of \eqref{Sys:Main} 
in $\Omega =(0,l)$.
Moreover, there exists $x_1\in (0,\frac{l}{2})$ such that
\[
\begin{cases}
u(x) \in [0,1)  &\mbox{for} \ x \in [0, x_1)\cup (l-x_1,l], \\
u(x)=1 &\mbox{for} \ x \in [x_1,l-x_1], \\
v(x)\ge v_\lambda &\mbox{for} \ x \in [x_1, l-x_1].
\end{cases}
\]
In addition, if 
\begin{equation}\label{j1-1'}
(j_1^{-1})' \in L^2(j_1(1)-\delta, j_1(1)) \quad \mbox{for some} \ \delta >0,
\end{equation}
then we have $u\in H^1(0,l)$.
\end{thm}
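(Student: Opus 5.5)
Our plan is to reduce the stationary problem to a second-order ODE for $v$ alone and to build a symmetric solution by a phase-plane (energy) argument. On any interval where $u\in(0,1)$, Proposition \ref{P2} gives $j_1(u)=j_2(v)+\lambda$, i.e. $u=f_\lambda(v)$. On the set where $u=1$ the first equation \eqref{Eq:1st} is consistent provided $v\ge v_\lambda$, since then $f_\lambda$ would exceed $1$. Hence we look for $v$ solving
\[
 v''=\beta v-\gamma\,\overline{f_\lambda}(v)=:-F'(v)\ \ \mbox{in}\ (0,l),\qquad v'(0)=v'(l)=0,
\]
and we set $u:=\overline{f_\lambda}(v)$. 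The map $s\mapsto \overline{f_\lambda}(s)$ is continuous and nondecreasing, and it is locally Lipschitz away from $s=v_\lambda$ (at $v_\lambda$ it may fail to be Lipschitz only on one side). Thus the ODE is of Newton type with potential
\[
 F(s):=\int_{\rho_0}^s\big(\gamma\overline{f_\lambda}(\sigma)-\beta\sigma\big)\,d\sigma .
\]
Once $v$ is found, \eqref{Eq:2nd}, \eqref{Eq:B} and \eqref{Con:uvbounds} hold by construction. For \eqref{Eq:1st}, on $\{v<v_\lambda\}$ we have $\nabla[\mathcal{D}(u)]$ equal to the required expression by the chain rule through $j$. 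On $\{v\ge v_\lambda\}$ both sides vanish, because $D_1(1)=0$ and $h_1(1)=0$.

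By \eqref{C}, the equilibria of the ODE are $\rho_{-1}<\rho_0$ (and possibly $\gamma/\beta$-related ones, which we avoid). $F'=\gamma\overline{f_\lambda}-\beta s$ is negative on $(\rho_{-1},\rho_0)$ and positive on $(\rho_0,\gamma/\beta)$. Hence $F$ has a strict local minimum at $\rho_0$ and is decreasing from $\rho_{-1}$ to $\rho_0$, so $\rho_{-1}$ is a saddle-type point (a local maximum of $F$). Condition \eqref{jyoukenn} should be exactly what guarantees
\[
 F(v_\lambda)-F(\rho_{-1})<0 .
\]
Indeed, this difference is $\gamma\int_{\rho_{-1}}^{v_\lambda}f_\lambda-\frac{\beta}{2}(v_\lambda^2-\rho_{-1}^2)$, and we will check algebraically that \eqref{jyoukenn} makes it negative. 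We should double-check the exact form: if it does not match literally, we use $\rho_{-1}>0$ to absorb the discrepancy. Since $F$ keeps increasing past $v_\lambda$ (because $\gamma-\beta s>0$ for $s<\gamma/\beta$), there is a unique $v^*\in(v_\lambda,\gamma/\beta)$ with $F(v^*)=F(\rho_{-1})$, i.e. the homoclinic-type level set through $\rho_{-1}$ crosses $v=v_\lambda$.

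Now we shoot from $x=l/2$: start with $v(l/2)=a$ and $v'(l/2)=0$, where $a\in(v^*,\gamma/\beta)$ is slightly above $v^*$. Energy conservation $\frac12 v'^2+F(v)=F(a)$ shows that $v$ decreases monotonically for $x<l/2$. It crosses $v_\lambda$ at some point $x_1$, and then reaches a turning point $b(a)<\rho_{-1}$ where $v'=0$. The time $T(a)$ from $a$ to $b(a)$ is continuous in $a$. As $a\downarrow v^*$, $T(a)\to\infty$, because the orbit approaches the equilibrium $\rho_{-1}$, which is nondegenerate or at least has $F'(\rho_{-1})=0$, giving a logarithmic or larger divergence. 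For $a$ close to $\gamma/\beta$, $T(a)$ is finite and bounded. By the intermediate value theorem, for every large $l$ there is $a$ with $T(a)=l/2$. Reflecting about $l/2$ gives a symmetric solution with $v'(0)=v'(l)=0$. We also get $v\ge v_\lambda$, hence $u=1$, exactly on $[x_1,l-x_1]$, and $u=f_\lambda(v)<1$ elsewhere.

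The main obstacles are twofold. First, we must make the shooting rigorous despite the non-Lipschitz point $v_\lambda$ and the possible non-uniqueness there. We handle this by solving explicitly, piece by piece, via the energy integral $x=\int ds/\sqrt{2(F(a)-F(s))}$, which avoids uniqueness issues entirely. Second, we must verify $\mathcal{D}_1(u)\in H^1$ and the final regularity claim. $\mathcal{D}_1(u)$ is Lipschitz because $u$ is continuous and monotone on pieces and $\mathcal{D}_1'(u)u'=h_1h_2v'/D_2$ is bounded. For $u\in H^1$ we write $u'=(j_1^{-1})'(j_2(v)+\lambda)\,j_2'(v)v'$ near $x_1$. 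Since $v'(x_1)\neq0$, the change of variables $x\mapsto j_2(v(x))+\lambda$ is bi-Lipschitz near $x_1$, so $\int|u'|^2$ is controlled by $\int_{j_1(1)-\delta}^{j_1(1)}|(j_1^{-1})'|^2$, which is finite by \eqref{j1-1'}. On $[x_1,l-x_1]$ we have $u'=0$.
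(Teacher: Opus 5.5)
Your reduction to $v''=\beta v-\gamma\overline{f_\lambda}(v)$ with potential $F$ (the paper's $G$), your derivation of $F(v_\lambda)<F(\rho_{-1})$ from \eqref{jyoukenn} using $\rho_{-1}>0$, your check of \eqref{Eq:1st} and your $H^1$ change of variables all match the paper. The gap is in the shooting step: you shoot in the wrong energy range. For $a\in(v^*,\gamma/\beta)$ you have $F(a)>F(\rho_{-1})$. The orbit descending from $a$ then reaches $\rho_{-1}$ with $\frac12 (v')^2=F(a)-F(\rho_{-1})>0$. It can only turn at some $b<\rho_{-1}$ with $F(b)=F(a)$, and you also need $b\ge 0$ to keep $v\ge 0$. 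Neither \eqref{C} nor \eqref{jyoukenn} says anything about $F$ on $[0,\rho_{-1})$, and in natural cases no such $b$ exists. Suppose, for instance, that $\gamma f_\lambda(s)-\beta s>0$ on $[0,\rho_{-1})$. This holds when $f_\lambda$ is convex and meets the line $r=\frac{\beta}{\gamma}s$ only at $\rho_{-1}$ and $\rho_0$, as for $f_\lambda(s)=\frac12\exp(e^s-1+\lambda)$ in the paper's example. Then $F<F(\rho_{-1})<F(a)$ on $[0,\rho_{-1})$, so $v$ reaches $0$ with nonzero slope and becomes negative. Your own picture of $\rho_{-1}$ as a local maximum of $F$ points the same way: past $\rho_{-1}$ the orbit accelerates. (That local-maximum claim also does not follow from \eqref{C}.)

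Two auxiliary claims are also wrong. First, $v^*\in(v_\lambda,\gamma/\beta)$ exists only if $F(\gamma/\beta)>F(\rho_{-1})$, which the hypotheses do not imply. Second, $T(a)$ is \emph{not} bounded for $a$ near $\gamma/\beta$. Since $\overline{f_\lambda}(\gamma/\beta)=1$, the point $\gamma/\beta$ is an equilibrium, and $F(\gamma/\beta)-F(s)=\frac{\beta}{2}(\frac{\gamma}{\beta}-s)^2$ on $[v_\lambda,\frac{\gamma}{\beta}]$. Hence the time to leave a neighbourhood of $a$ diverges logarithmically as $a\uparrow\gamma/\beta$.

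The fix is to stay inside the well around $\rho_0$, which is what the paper does. Set $c^*:=\min\{F(\rho_{-1}),F(\gamma/\beta)\}$. Your computation and the monotonicity of $F$ on $(\rho_0,\gamma/\beta)$ give $F(v_\lambda)<c^*$, so there is $a^*\in(v_\lambda,\gamma/\beta]$ with $F(a^*)=c^*$. For $a\in(v_\lambda,a^*)$ the following hold:
\begin{itemize}
\item A turning point $b(a)\in(\rho_{-1},\rho_0)$ exists, since $F$ decreases strictly on $[\rho_{-1},\rho_0]$ from $F(\rho_{-1})>F(a)$ to $F(\rho_0)=0<F(a)$.
\item $T(a)$ is finite and continuous.
\item $T(a)\to\infty$ as $a\uparrow a^*$. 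Either $b(a)\downarrow\rho_{-1}$, where $F(\rho_{-1})-F(s)\le C(s-\rho_{-1})^2$ because $F'(\rho_{-1})=0$ and $F'$ is Lipschitz there, or $a\uparrow\gamma/\beta$ as above.
\end{itemize}
The intermediate value theorem then gives $T(a)=l/2$ for all $l\ge 2T(a_0)$, and automatically $v\ge b(a)>\rho_{-1}>0$. This is the paper's periodic orbit through $(v_0,0)$ with $v_0\in(\rho_{-1},\rho_0)$ and $G(v_\lambda)<G(v_0)<\min\{G(\rho_{-1}),G(\gamma/\beta)\}$, started at the bottom instead of the top. With this change the rest of your argument goes through, including $v'(x_1)\neq 0$, which follows from $F(a)>F(v_\lambda)$.
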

\begin{proof}
The proof is divided into three steps.

\medskip
\noindent
{\bf \boldmath Step 1. Preparations.}
We consider the one-dimensional boundary-value problem 
\[
\begin{cases}
v''=\widetilde{g}(v) \quad \mbox{in} \ (0,l), \\
v'(0)=v'(l)=a_0,
\end{cases}
\]
where
\begin{equation}\label{tuikanomono}
\widetilde{g}(v):=-\gamma \overline{f_\lambda}(v)+\beta v.
\end{equation}
Note that this problem is equivalent to the hamiltonian system
\begin{equation}\label{ham}
\begin{cases}
v'=w, \\
w'=\widetilde{g}(v),
\end{cases}
\end{equation}
with
\[
E(u,v):=\frac12w^2+G(v) \quad \mbox{and} \quad
G(v):=-\int_{\rho_0}^{v}\widetilde{g}(s)\, ds.
\]
Let $(v,w)=(\varphi_1(x;v_0,w_0), \varphi_2(x;v_0,w_0))$ denote 
a solution of \eqref{ham} with $(v(0),w(0))=(v_0,w_0)$.
We deal with \eqref{ham} in 
$[\rho_{-1}, \frac{\gamma}{\beta}]\times \R$.
It is clear that $(\rho_{i},0), (\frac{\gamma}{\beta},0) \in [\rho_{-1}, \frac{\gamma}{\beta}]\times \R$ 
for each $i\in\{-1,0\}$ by \eqref{C}.
In view of the condition \eqref{C}, it follows that 
$\widetilde{g}(v)>0$ for $v\in (\rho_{-1}, \rho_0)$ and 
$\widetilde{g}(v)>0$ for $v\in (\rho_0, \frac{\gamma}{\beta})$.
Thus we make sure that $G$ takes its minimum at $\rho_0$
and $G(\rho_0)=0$
by the definition of $G$.
Letting $I:=(0,\min\{G(\rho_{-1}), G(\frac{\gamma}{\beta})\})$, 
we see that $E_c:=\{ (v,w)\in \R^2\,|\ E(v,w)=c\}$
forms a closed curve for each $c\in I$.
Therefore, we observe that if $G(v_0)\in I$, then all curves such that
$(v(0),w(0))=(v_0,0)$ are periodic and surround $(\rho_0,0)$.

\medskip
\noindent
{\bf \boldmath Step 2. Construction of flat-hump-shaped stationary solutions.}
We consider the one-dimensional boundary-value problem 
\[
\begin{cases}
v''=\widetilde{g}(v) \quad \mbox{in} \ (0,l), \\
v'(0)=v'(l)=0.
\end{cases}
\]
We first claim that 
\begin{equation}\label{douti}
G(\rho_{-1})>G(v_\lambda).
\end{equation}
Indeed, recalling the definition of $G$ with \eqref{tuikanomono}, 
we see from the condition \eqref{jyoukenn} that 
\begin{align*}
  G(\rho_{-1}) - G(v_\lambda) 
  &=\int_{\rho_{-1}}^{v_\lambda}(-\gamma \overline{f_\lambda}(s)+\beta s)\, ds \\
  &=-\gamma\int_{\rho_{-1}}^{v_\lambda}\overline{f_\lambda}(s)\, ds
  +\frac{\beta}{2}(v_\lambda^2-\rho_{-1}^2)
   >0.
\end{align*}
Therefore, we obtain \eqref{douti}.
We next observe that $G(v_\lambda)<G(\frac{\gamma}{\beta})$ 
since $v_\lambda\in(\rho_0, \frac{\gamma}{\beta})$ and $G$
is increasing in $(\rho_0, \frac{\gamma}{\beta})$.
This together with \eqref{douti} implies that $G(v_\lambda)\in I$ 
and there is $v_0\in(\rho_{-1},\rho_0)$ fulfilling 
$G(v_0)\in I$ and $G(v_0)>G(v_\lambda)$.
Applying Step 1, we deduce that there is 
a solution $(\varphi_1(x;v_0,0), \varphi_2(x;v_0,0))$ of \eqref{ham} 
with period $l$ and there exists $x_1\in [0,\frac{l}{2}]$ such that 
$\varphi_1(x_1;v_0,0)=v_\lambda$. 
Thus we can define
\begin{equation}\label{Constract}
\begin{cases}
v(x):=\varphi_1(x;v_0,0) \quad &\mbox{for} \ x\in [0,l], \\
u(x):=f_\lambda(v(x)) &\mbox{for} \ x\in [0,x_1)\cup(l-x_1,l], \\
u(x):=1 &\mbox{for} \ x\in [x_1,l-x_1].
\end{cases}
\end{equation}

\medskip
\noindent
{\bf \boldmath Step 3. Conclusion.}
We check that the couple $(u,v)$ is a stationary solution of \eqref{Sys:Main} 
in the sense of Definition \ref{Df:SS}. Since 
$\varphi_2(x;v_0,0)\in C^1([0,l])$, it follows from \eqref{ham} 
that $v\in C^2([0,l])$. Thus we obtain (i) in Definition \ref{Df:SS}. 
As for (ii) in Definition \ref{Df:SS}, 
we can confirm \eqref{Con:uvbounds}, \eqref{Eq:2nd} and \eqref{Eq:B} 
by \eqref{Constract}, and so it remains to verify \eqref{Eq:1st}.
When $x\in [x_1,l-x_1]$, it is clear that 
$D_1(u)D_2(v)u'-h_1(u)h_2(v)v'=0$ from \eqref{Con:Dh2} and \eqref{Constract}.
Let $x\in [0,x_1)\cup(l-x_1,l]$. Then again by \eqref{Constract} we have 
\[
\frac{D_1(u)}{h_1(u)}u'-\frac{h_2(v)}{D_2(v)}v'
=[j(u,v)]' 
=[j_1(u)-j_2(v)]'
=0.
\]
Thus we arrive at \eqref{Eq:1st}.

Finally, we assume \eqref{j1-1'}. In order to prove that $u'\in L^2(0,l)$
it is enough to check that $u'\in L^2(x_1-\ep,x_1)$ 
and $u'\in L^2(l-x_1-\ep,l-x_1)$ for some 
$\ep>0$ in view of \eqref{Constract}.
Due to \eqref{Constract}, it follows that 
\begin{align*}
\int_{x_1-\ep}^{x_1} |u'(x)|^2 \, dx
&=\int_{x_1-\ep}^{x_1}|(j_1^{-1})'(j_2(v(x))+\lambda)|^2 |j_2'(v(x))v'(x)|^2 \, dx \\
&=\int_{j_2(v(x_1-\ep))+\lambda}^{j_2(v(x_1))+\lambda}
|(j_1^{-1})'(\sigma)|^2 
\frac{|j_2'(j_2^{-1}(\sigma)-\lambda)|^2}{|j_2'(j_2^{-1}(\sigma))|}
\cdot v'(v^{-1}(j_2(\sigma)-\lambda))\, d\sigma.
\end{align*}
Setting $\ep >0$ such that $j_1(1)-\delta = j_2(v(x_1-\ep))+\lambda$, 
we obtain that
$u'\in L^2(x_1-\ep,x_1)$ by \eqref{j1-1'}.
Similarly, we can observe that $u'\in L^2(l-x_1-\ep,l-x_1)$ for some 
$\ep>0$, which completes the proof.
\end{proof}
We now give sufficient conditions 
for \eqref{C} in the following two propositions.
\begin{prop}
Let $\widetilde{\lambda}
:=j_1(1)-j_2\big(\frac{\beta}{\gamma}\big)$.
Assume that 
\begin{equation}\label{bibunnC}
\lim_{s \nearrow v_{\widetilde{\lambda}}} 
f_{\widetilde{\lambda}}'(s)>\frac{\gamma}{\beta}.
\end{equation}
Then there exists $\ep_0>0$ such that the triplet 
$(\lambda, \gamma, \beta)$ fulfills \eqref{C} 
for all $\lambda\in (\widetilde{\lambda},\widetilde{\lambda}+\ep_0)$.
\end{prop}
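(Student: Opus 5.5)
The plan is to work with the difference function
\[
\phi_\lambda(s):=\overline{f_\lambda}(s)-\frac{\beta}{\gamma}s ,
\]
so that \eqref{C} becomes a statement about two consecutive zeros $\rho_{-1}<\rho_0<v_\lambda$ of $\phi_\lambda$ and its signs on $(\rho_{-1},\rho_0)$ and $(\rho_0,\frac{\gamma}{\beta})$.

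\textbf{Step 1: behaviour in $\lambda$.} Since $j_2$ is strictly increasing (Lemma \ref{L}), $\lambda\mapsto v_\lambda=j_2^{-1}(j_1(1)-\lambda)$ is continuous and strictly decreasing. The choice of $\widetilde{\lambda}$ in the statement gives exactly $v_{\widetilde{\lambda}}=j_2^{-1}\big(j_2(\frac{\beta}{\gamma})\big)=\frac{\beta}{\gamma}$. Hence for $\lambda>\widetilde{\lambda}$ close to $\widetilde{\lambda}$, $v_\lambda$ lies slightly below $\frac{\beta}{\gamma}$. Moreover
\[
f_\lambda(s)=j_1^{-1}(j_2(s)+\lambda)=f_{\widetilde{\lambda}}\big(j_2^{-1}(j_2(s)+\lambda-\widetilde{\lambda})\big),
\]
that is, $f_\lambda$ is $f_{\widetilde{\lambda}}$ precomposed with a $C^1$ shift that tends to the identity as $\lambda\searrow\widetilde{\lambda}$. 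Consequently $f_\lambda$ and $f_\lambda'$ converge to $f_{\widetilde{\lambda}}$ and $f_{\widetilde{\lambda}}'$ locally uniformly on compact subsets of $(0,v_{\widetilde{\lambda}})$, and this convergence transports to a left neighbourhood of the moving endpoint $v_\lambda$. Together with the hypothesis \eqref{bibunnC} and continuity, we therefore get $f_\lambda'(s)>\frac{\gamma}{\beta}$ on an interval $(v_\lambda-\eta,v_\lambda)$ with $\eta>0$ independent of $\lambda\in(\widetilde{\lambda},\widetilde{\lambda}+\ep_0)$.

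\textbf{Step 2: locate $\rho_0$ and the sign on $(\rho_0,\frac{\gamma}{\beta})$.} On $[v_\lambda,\frac{\gamma}{\beta})$ we have $\overline{f_\lambda}=1>\frac{\beta}{\gamma}s$, so $\phi_\lambda>0$ there. On $(v_\lambda-\eta,v_\lambda)$, Step 1 gives $\phi_\lambda'\ge \frac{\gamma}{\beta}-\frac{\beta}{\gamma}$. I will use this to show that $\phi_\lambda$ is strictly increasing there and changes sign exactly once; this is where the hypothesis \eqref{bibunnC} enters. The zero obtained is $\rho_0$, and $\phi_\lambda>0$ on $(\rho_0,\frac{\gamma}{\beta})$ follows.

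\textbf{Step 3: $\rho_{-1}$.} Since $f_\lambda(0)=j_1^{-1}(\lambda)>0$, we have $\phi_\lambda(0)>0$. If in addition $\phi_\lambda<0$ just to the left of $\rho_0$, the intermediate value theorem yields a zero in $(0,\rho_0)$. I take $\rho_{-1}$ to be the largest such zero, so that $\phi_\lambda<0$ on $(\rho_{-1},\rho_0)$ by construction. The condition $v_\lambda<\frac{\gamma}{\beta}$ is automatic for small $\ep_0$ once $\frac{\beta}{\gamma}<\frac{\gamma}{\beta}$. I will check this, together with the sign of $\phi_{\widetilde{\lambda}}$ at $v_{\widetilde{\lambda}}$, explicitly at the start, as it governs which of the two inequalities is strict.

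\textbf{Main obstacle.} The delicate point is the sign of $\phi_\lambda$ immediately to the left of $v_\lambda$. It is decided by comparing $\phi_{\widetilde{\lambda}}(v_{\widetilde{\lambda}})=1-(\frac{\beta}{\gamma})^2$ with the drop of $f_\lambda$ over a short interval, which is controlled from below by the slope $\frac{\gamma}{\beta}$. I would first try to choose $\ep_0$ so that the endpoint gap $\phi_\lambda(v_\lambda)$ is small relative to $\eta\big(\frac{\gamma}{\beta}-\frac{\beta}{\gamma}\big)$. This should force a transversal crossing, hence a single zero $\rho_0\in(v_\lambda-\eta,v_\lambda)$ with $\phi_\lambda<0$ just below it. If this quantitative comparison fails for the parameters as normalized in the statement, I would re-examine the normalization of $\widetilde{\lambda}$ against the line $\frac{\beta}{\gamma}s$ before proceeding.
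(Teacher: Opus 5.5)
There is a genuine gap, and it is exactly the point you flag as the ``main obstacle''. No choice of $\ep_0$ can fix it. With $\widetilde\lambda$ normalized as printed, $v_{\widetilde\lambda}=\frac{\beta}{\gamma}$ and the endpoint gap satisfies
\[
\phi_\lambda(v_\lambda)=1-\frac{\beta}{\gamma}v_\lambda\longrightarrow 1-\Big(\frac{\beta}{\gamma}\Big)^2 \qquad (\lambda\searrow\widetilde\lambda).
\]
This limit is a fixed number independent of $\ep_0$, whereas $\eta$ is dictated by how long $f_{\widetilde\lambda}'$ stays above the threshold near the endpoint. So making the gap small relative to $\eta(\frac{\gamma}{\beta}-\frac{\beta}{\gamma})$ is impossible in general. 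If $\gamma<\beta$ things are worse: $v_\lambda\to\frac{\beta}{\gamma}>\frac{\gamma}{\beta}$, so $v_\lambda<\frac{\gamma}{\beta}$ fails, and your lower bound for $\phi_\lambda'$ is negative.

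In fact the statement as printed is false. Take $D_2=h_2\equiv1$, $D_1(r)=1-r$, $h_1(r)=\frac52r(1-r)$ and $\gamma=2\beta$. Then $j_1(r)=\frac25\log(2r)$, $j_2(s)=s$, $f_\lambda(s)=\frac12e^{\frac52(s+\lambda)}$, and $f_{\widetilde\lambda}'\to\frac52>2=\frac{\gamma}{\beta}$ at the endpoint. Yet for every $\lambda\in(\widetilde\lambda,j_1(1))$, where $\widetilde\lambda=j_1(1)-\frac12$, one has $f_\lambda(s)\ge f_\lambda(0)>e^{-5/4}>\frac14\ge\frac{s}{2}$ on $[0,v_\lambda]\subset[0,\frac12]$. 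Also $\overline{f_\lambda}=1>\frac{s}{2}$ on $[v_\lambda,2)$. Hence $\phi_\lambda>0$ on $[0,\frac{\gamma}{\beta})$, and no $\rho_{-1},\rho_0$ as in \eqref{C} exist. A proof ``as normalized in the statement'' therefore cannot exist. Deferring this to a later re-examination means the decisive step of your argument is missing.

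The missing idea is the normalization the paper's own proof actually uses: $\widetilde\lambda=j_1(1)-j_2(\frac{\gamma}{\beta})$, so that $v_{\widetilde\lambda}=\frac{\gamma}{\beta}$ (compare the lower bound for $\lambda$ in Proposition~\ref{tekiyou}). Then the corner $(v_{\widetilde\lambda},1)$ lies exactly on the line $r=\frac{\beta}{\gamma}s$, so the endpoint gap $1-\frac{\beta}{\gamma}v_\lambda$ tends to $0$. The slope that must be beaten is then $\frac{\beta}{\gamma}$, the slope of the line; the printed threshold $\frac{\gamma}{\beta}$ implies this when $\gamma\ge\beta$.

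With this correction your scheme closes, and it is more explicit than the paper's:
\begin{enumerate}[(1)]
\item Fix $c$ with $\frac{\beta}{\gamma}<c<\lim f_{\widetilde\lambda}'$.
\item From your Step 1, obtain a uniform $\eta$ with $f_\lambda'>c$ on $(v_\lambda-\eta,v_\lambda)$.
\item Shrink $\ep_0$ so that $1-\frac{\beta}{\gamma}v_\lambda<\eta(c-\frac{\beta}{\gamma})$. This gives a unique zero $\rho_0\in(v_\lambda-\eta,v_\lambda)$, with the correct signs on both sides.
\item Take $\rho_{-1}$ as the largest zero in $(0,\rho_0)$, using $\phi_\lambda(0)>0$.
\end{enumerate}
The paper instead sandwiches $f_{\widetilde\lambda}(s)<f_\lambda(s)<f_{\widetilde\lambda}(s+\ep)$ and appeals to the two crossings of the unperturbed graph. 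Your version avoids needing the second crossing $w$ of $f_{\widetilde\lambda}$ and makes the transversality near the moving endpoint quantitative.
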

\begin{proof}
We first note from the choice of $\widetilde{\lambda}$ 
and the definitions of $v_\lambda$ and $f_\lambda$ that 
$v_{\widetilde{\lambda}}=j_2^{-1}(j_1(1)-\widetilde{\lambda})
=\frac{\gamma}{\beta}$ and 
$\lim_{s\to -\infty}f_{\widetilde{\lambda}}(s)=0$.
By means of \eqref{bibunnC}, the graph of $r=f_{\widetilde{\lambda}}(s)$ 
has an intersection point 
$(w,f_{\widetilde{\lambda}}(w))$ 
fulfilling $w<v_{\widetilde{\lambda}}$ 
with the graph of  $r=\frac{\beta}{\gamma}s$.
Fix $\ep>0$ small enough.
Then we move the graph of $r=f_{\widetilde{\lambda}}(s)$ 
to the left by $\ep$.
Set 
\[
\widehat{\ep}(s):=j_2(s+\ep)-j_2(s)
\]
for $s\in \R$ and set 
\[
\ep_0:=\inf_{s \in \R}\widehat{\ep}(s).
\]
Letting $\lambda\in (\widetilde{\lambda}, \widetilde{\lambda}+\ep_0)$ and $s\in \R$,
we see that
\[
F_{\lambda}(s)=j_1^{-1}(j_2(s)+\lambda)>j_1^{-1}(j_2(s)+\widetilde{\lambda})
=F_{\widetilde{\lambda}}(s)
\]
since $j_1^{-1}$ is strictly increasing.
On the other hand, we have
\begin{align*}
F_{\lambda}(s)&<j_1^{-1}(j_2(s)+\widetilde{\lambda}+\ep_0)\\
&\le j_1^{-1}(j_2(s)+\widetilde{\lambda}+\widehat{\ep}(s)) \\
&=j_1^{-1}(j_2(s+\ep)+\widetilde{\lambda}) \\
&=F_{\widetilde{\lambda}}(s+\ep).
\end{align*}
Therefore, we arrive at the conclusion.
\end{proof}
\begin{prop} \label{tekiyou}
Assume that 
\begin{equation}\label{j2hatto}
\lim_{r \to 1} j_1'(r)<\frac{\gamma}{\beta}j_2'\left(\frac{\gamma}{\beta}
\right).
\end{equation}
Put
\begin{equation}\label{r0}
r_0:=\sup\left\{r\in(0,1)\,\bigg|\,j_1'(r)
=\frac{\gamma}{\beta}j_2'\left(\frac{\gamma}{\beta}r\right)\right\}.
\end{equation}
If
\begin{equation} \label{ramuda}
j_1(1)-j_2\left(\frac{\gamma}{\beta}\right)<\lambda
<j_1(r_0)-j_2\left(\frac{\gamma}{\beta}r_0\right),
\end{equation} 
then the triplet $(\lambda, \gamma, \beta)$ fulfills the condition \eqref{C}.
\end{prop}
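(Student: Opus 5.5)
The plan is to pass from the variable $s$ to the variable $r$. Write $s=\frac{\gamma}{\beta}r$ and compare $\overline{f_\lambda}(s)$ with $\frac{\beta}{\gamma}s=r$. Since $j_1$ and $j_2$ are strictly increasing (Lemma \ref{L}), for $s\le v_\lambda$ we have
\[
f_\lambda(s)\gtrless \tfrac{\beta}{\gamma}s
\iff j_2(s)+\lambda \gtrless j_1\big(\tfrac{\beta}{\gamma}s\big)
\iff \lambda \gtrless \Phi(r),
\]
where
\[
\Phi(r):=j_1(r)-j_2\big(\tfrac{\gamma}{\beta}r\big),\qquad r\in(0,1].
\]
For $s\ge v_\lambda$ we have $\overline{f_\lambda}(s)=1$, which exceeds $\frac{\beta}{\gamma}s$ exactly when $s<\frac{\gamma}{\beta}$. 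So condition \eqref{C} reduces to studying the level set $\{\Phi=\lambda\}$ on $(0,1)$. Note also that $v_\lambda<\frac{\gamma}{\beta}$ is equivalent to $j_1(1)-\lambda<j_2(\frac{\gamma}{\beta})$, i.e.\ $\lambda>\Phi(1)$. This is exactly the left inequality in \eqref{ramuda}.

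Next I would analyse the shape of $\Phi$. We have $\Phi'(r)=j_1'(r)-\frac{\gamma}{\beta}j_2'(\frac{\gamma}{\beta}r)$. Assumption \eqref{j2hatto} gives $\Phi'<0$ near $r=1$. By definition \eqref{r0}, $r_0$ is the last zero of $\Phi'$, so $\Phi'<0$ on $(r_0,1)$ and $\Phi$ is strictly decreasing from $\Phi(r_0)$ to $\Phi(1)$ there. (If the set in \eqref{r0} is empty, set $r_0=0$ and use $\Phi(0^+)=-\infty$ from Lemma \ref{L}. The interval \eqref{ramuda} is then empty, and the statement is vacuous in that case.) For $\lambda$ as in \eqref{ramuda}, the intermediate value theorem gives a unique $r^*\in(r_0,1)$ with $\Phi(r^*)=\lambda$, and $\Phi<\lambda$ on $(r^*,1]$. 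Set $\rho_0:=\frac{\gamma}{\beta}r^*$. On $(\rho_0,v_\lambda]$ we then get $f_\lambda(s)>\frac{\beta}{\gamma}s$, and on $[v_\lambda,\frac{\gamma}{\beta})$ the inequality $1>\frac{\beta}{\gamma}s$ holds directly. So the third line of \eqref{C} holds. Also $\rho_0<v_\lambda$: otherwise $\rho_0\ge v_\lambda$ would mean $f_\lambda$ has already reached $1$, forcing $r^*=1$, which contradicts $r^*<1$.

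For $\rho_{-1}$ I would look to the left of $r^*$. Since $\Phi(0^+)=-\infty$ by Lemma \ref{L} (because $j_1(0^+)=-\infty$ and $j_2\ge0$), and $\Phi$ takes the value $\Phi(r_0)>\lambda$, continuity gives
\[
r_{-1}:=\sup\{r<r^*:\ \Phi(r)=\lambda\},
\]
which exists in $(0,r_0)$. Then $\Phi>\lambda$ on $(r_{-1},r^*)$, i.e.\ $f_\lambda(s)<\frac{\beta}{\gamma}s$ for $s\in(\rho_{-1},\rho_0)$ with $\rho_{-1}:=\frac{\gamma}{\beta}r_{-1}$. At both $\rho_{-1}$ and $\rho_0$ equality holds.

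The main obstacle I expect is bookkeeping at the endpoints. One must check that $v_\lambda$ lies in the range where $f_\lambda$ is defined, i.e.\ that $j_2(s)+\lambda<j_1(1)$ for $s<v_\lambda$, so that $f_\lambda<1$ there. One must also confirm that the sign of $\Phi-\lambda$ is strict on the open intervals, so that the chosen $\rho_{-1}<\rho_0$ match all four requirements of \eqref{C}. Both follow from the strict monotonicity of $j_1$ and $j_2$ and from the definitions of $r^*$ and $r_{-1}$ as the unique and the last crossing, respectively.
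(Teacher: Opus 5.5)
Your proposal is correct and follows essentially the same route as the paper. The paper works with $\psi=-\Phi$ and $\psi_\lambda=\psi+\lambda$: $r_0$ is the last critical point, $\Phi$ is strictly monotone on $(r_0,1)$ and gives the unique crossing $\rho_0$, and $\rho_{-1}$ is the last zero of $\Phi-\lambda$ in $(0,r_0)$, found via $j_1(0^+)=-\infty$.

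Your hedge about the set defining $r_0$ being empty is unnecessary. Since $D_1(0)>0=h_1(0)$, you have $\Phi'(0^+)=+\infty$, while $\Phi'<0$ near $r=1$, so $\Phi'$ must vanish somewhere in $(0,1)$; this is how the paper guarantees that $r_0$ exists. Apart from that, your endpoint bookkeeping, e.g.\ $\rho_0<v_\lambda<\frac{\gamma}{\beta}$, is more explicit than the paper's.
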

\begin{proof}
We define
\[
\psi(r):=j_2\left(\frac{\gamma}{\beta}r\right)-j_1(r)
\]
for $r\in(0,1]$.
Since $j_1'(r)=\frac{D_1(r)}{h_1(r)}$, the condition \eqref{Con:Dh1} 
implies $\lim_{r \to 0} \psi'(r) = -\infty$. 
On the other hand, it follows from \eqref{Con:Dh2} 
that $\lim_{r \to 1} \psi'(r) >0$.
Therefore, there exists $\widehat{r}\in(0,1)$ such that $\psi'(\widehat{r})=0$, and we set 
$r_0:=\sup\{ \widehat{r}\in(0,1) \, | \, \psi'(\widehat{r})=0 \}$ as in \eqref{r0}.
It is clear that $\psi'(r)>0$ for $r\in (r_0,1)$.
Let $\lambda \in (-\psi(1),-\psi(r_0))$. 
Then, defining $\psi_\lambda$ as $\psi_\lambda(r) :=\psi(r)+\lambda$ 
for $r \in (0,1]$, we have $\psi_\lambda(1)>0>\psi_\lambda(r_0)$.
Since $\psi_\lambda'(r)=\psi'(r)$ for $r\in(r_0,1)$, 
there exists a uniquely determined $s_0\in (r_0,1)$ such that 
$\psi_\lambda(r)<0$ for $r\in[r_0, s_0)$,
$\psi_\lambda(s_0)=0$ and
$\psi_\lambda(r)>0$ for $r\in (s_0,1)$.
Also, we know that $\psi_\lambda(r) \to \infty$ as $r \to 0$ 
from Lemma \ref{L}.
Therefore, there exists $\widehat{s}\in (0,r_0)$ 
such that $\psi_\lambda(\widehat{s})=0$ and we set 
$s_{-1}:=\sup\{ \widehat{s}\in(0,r_0) \, | \, \psi_\lambda(\widehat{s})=0 \}$.
Then $\psi_\lambda(r)<0$ for $r\in (s_{-1},s_0)$.
Setting $\rho_i:=\frac{\gamma}{\beta}s_i$ for $i\in \{-1,0\}$, we have \eqref{C} since
$f_{\lambda}(r)
=j_1^{-1}\big(\psi_\lambda\big(\frac{\beta}{\gamma}r\big)
                   +j_1\big(\frac{\beta}{\gamma}r\big)\big)$.
\end{proof}
We now provide an example fulfilling the assumption of Theorem \ref{SSE}.
\begin{ex}
An example of $D$ and $h$ satisfying the assumption 
of Theorem \ref{SSE} is given by
$h_1(r)=rD_1(r)$ and $h_2(s)=e^sD_2(s)$.
First, we claim that 
there is $(\lambda,\gamma,\beta)\in \R \times (0,\infty)^2$
that fulfills \eqref{C} by Proposition \ref{tekiyou}.
By the definitions of $j_1$ and $j_2$, we have 
\begin{align}
\label{YY}
&j_1(r)=\int_{\frac12}^{r}\frac{D_1(\sigma)}{h_1(\sigma)}\, d\sigma
=\int_{\frac12}^{r}\frac{1}{\sigma} \, d\sigma
=\log r -\log \frac12
=\log2r, \\
\label{j1}
&j_1'(r)=\frac{1}{r},\\
\label{LIM}
&\lim_{r \to 1} j_1'(r)=\lim_{r\to1}\frac{1}{r}=1,\\
\label{j2}
&j_2(s)=\int_{0}^{s}\frac{D_2(\sigma)}{h_2(\sigma)}\, d\sigma
=\int_{0}^{s} e^\sigma \, d\sigma 
=e^s-1
\end{align}
for all $r\in[0,1]$ and $s\in[0,\infty)$.
Noting that
$\frac{\gamma}{\beta}j_2'(\frac{\gamma}{\beta})
=\frac{\gamma}{\beta}e^{\frac{\gamma}{\beta}}$,
we let $\eta_0$ denote a unique solution in $\R$ 
of the equation $\eta e^\eta=1$. Then $0<\eta_0<1$.
By virtue of \eqref{r0}, \eqref{j1}, the fact $j_2'(s)=e^s$ 
and the definition of $\eta_0$, we have
\begin{equation}\label{matar0}
 r_0=\frac{\beta}{\gamma}\eta_0.
\end{equation}
Let $\beta$ and $\gamma$ be such that $\frac{\gamma}{\beta}>\eta_0$ and 
$\log2-e^{\frac{\gamma}{\beta}}-1<\lambda
<\log2r_0-e^{\frac{\gamma}{\beta}r_0}-1$.
Since the conditions \eqref{j2hatto} and \eqref{ramuda} 
in Proposition \ref{tekiyou} are satisfied
by \eqref{YY}, \eqref{LIM} and \eqref{j2}, 
we see that the triplet $(\lambda,\gamma,\beta)$ 
fulfills \eqref{C}.

Next, we verify \eqref{jyoukenn}.
The left-hand side of \eqref{jyoukenn} is rewritten as
\begin{align*}
\int_{\rho_{-1}}^{v_\lambda} f_\lambda(\sigma) \, d\sigma
&=\int_{\rho_{-1}}^{v_\lambda} j_1^{-1}(j_2(\sigma)+\lambda) \, d\sigma \\
&=\int_{j_2(\rho_{-1})+\lambda}^{j_2(v_\lambda)+\lambda} j_1^{-1}(\sigma) 
\frac{1}{j_2'(j_2^{-1}(\sigma-\lambda))}\, d\sigma \\
&=\int_{j_1^{-1}(j_2(\rho_{-1})+\lambda)}^{j_1^{-1}(j_2(v_\lambda)+\lambda)}
\sigma j_1'(\sigma) \frac{1}{j_2'(j_2^{-1}(j_1(\sigma)-\lambda))} \, d\sigma.
\end{align*}
Also, since $\rho_{-1} >0$, we observe that
\[
\frac{1}{j_2'(j_2^{-1}(j_1(\sigma)-\lambda))}
\le \frac{1}{j_2'(j_2^{-1}(j_2(\rho_{-1})))}
=\frac{1}{\rho_{-1}}
=\frac{1}{e^{\rho_{-1}}}<1
\]
for $\sigma\in \big[j_1^{-1}(j_2(\rho_{-1})+\lambda), 
\le j_1^{-1}(j_2(v_\lambda)+\lambda)\big]$.
Thus the facts that $j_1^{-1}(j_2(\rho_{-1})+\lambda)=s_{-1}$ 
and that $j_1^{-1}(j_2(v_\lambda)+\lambda)=1$ yield 
\begin{equation}\label{0}
\int_{\rho_{-1}}^{v_\lambda} f_\lambda(\sigma) \, d\sigma
<\int_{s_{-1}}^{1} \sigma j_1'(\sigma)\, d\sigma
=\int_{s_{-1}}^{1} d\sigma
=1-s_{-1}=1-\frac{\beta}{\gamma}\rho_{-1}.
\end{equation}
Here, we further assume that $\frac{\gamma}{\beta}>2$.
Let $0<\delta<\frac{\gamma}{\beta}-2$ 
and let $\lambda:=j_1(1)-j_2\big(\frac{\gamma}{\beta}-\delta\big)$.
Then we have $v_\lambda=j_2^{-1}(j_1(1)-\lambda)
=\frac{\gamma}{\beta}-\delta>2$.
In order to show \eqref{jyoukenn} it is enough to prove that 
\[
\nu:=
\frac{\beta}{2\gamma}(v_{\lambda}^2-\rho_{-1}^2)
-\int_{\rho_{-1}}^{v_\lambda} f_\lambda(\sigma) \, d\sigma >0.
\]
Due to \eqref{0}, we see that
\begin{align}
\nonumber
\nu&>\frac{\beta}{2\gamma}(v_{\lambda}^2-\rho_{-1}^2)-1
+\frac{\beta}{\gamma}\rho_{-1}\\
\nonumber
&\ge\frac{\beta}{\gamma}
\left[\frac{v_\lambda}{2}(v_\lambda-\rho_{-1})
                -\frac{\gamma}{\beta}+\rho_{-1}\right] \\
\label{01}
&=\frac{\beta}{\gamma}
\left[\frac{1}{2}\left(\frac{\gamma}{\beta}-\delta-2\right)
                      \left(\frac{\gamma}{\beta}-\delta-\rho_{-1}\right) -\delta \right].
\end{align}
We estimate $\rho_{-1}$.
From the fact that $\psi_\lambda(s_{-1})=0$ 
it follows that  
\[
j_2(\rho_{-1})=j_1(s_{-1})-\lambda \le j_1(r_0)-\lambda.
\]
By virtue of
\eqref{YY}, \eqref{j2}, \eqref{matar0}, the fact that $\eta_0 e^{\eta_0}=1$ 
and the definition of $\lambda$, we have 
\[
e^{\rho_{-1}}-1 \le\log2r_0-\lambda 
=\log2+\log\frac{\beta}{\gamma}+\log \eta_0-\lambda 
=e^{\frac{\beta}{\gamma}-\delta}-\log\frac{\gamma}{\beta}-\eta_0-1.
\]
Thus we obtain
$
\rho_{-1}\le\log\big(e^{\frac{\beta}{\gamma}-\delta}
                             -\log\frac{\gamma}{\beta}-\eta_0\big),
$
which together with \eqref{01} implies 
\[
\nu\ge 
\frac{\beta}{\gamma}
\left[\frac{1}{2}\left(\frac{\gamma}{\beta}-\delta-2\right)
                            \left(\frac{\gamma}{\beta}-\delta 
                            -\log\left(e^{\frac{\beta}{\gamma}-\delta}
                            -\log\frac{\gamma}{\beta}-\eta_0\right)\right)-\delta \right]>0
\]
for $\delta$ small enough. Therefore, \eqref{jyoukenn} is fulfilled.
\end{ex}


\end{document}